\declaretheorem[numberwithin=section,refname={Theorem,Theorems},  Refname={Theorem,Theorems},name=Theorem]{theorem}
\declaretheorem[sibling=theorem,style=definition,refname={Re\-mark,Re\-marks}, Refname={Remark,Remarks},name=Remark]{remark}
\declaretheorem[sibling=theorem,refname={Pro\-po\-si\-tion,Pro\-po\-si\-tions},  Refname={Proposition,Propositions},name=Proposition]{proposition}
\declaretheorem[sibling=theorem,refname={Lem\-ma,Lem\-mas},  Refname={Lemma,Lemmas},name=Lemma]{lemma}
\declaretheorem[sibling=theorem,refname={Corollary,Corollary},  Refname={Corollary,Corollaries},name=Corollary]{corollary}
\declaretheorem[sibling=theorem,style=definition,refname={De\-fi\-ni\-tion,Def\-initions}, Refname={Definition,Definitions},name=Definition]{definition}
\declaretheorem[sibling=theorem,style=definition,refname={Notation,Notations}, Refname={Notation,Notations},name=Notation]{notation}
\newcommand{\seq}{\subseteq}
\newcommand{\N}{\ensuremath{\mathbb{N}}}
\newcommand{\Va}{\ensuremath{\mathbb{V}}}
\newcommand{\K}{\ensuremath{\mathbb{K}}}
\mathchardef\mhyphen="2D
\DeclareMathOperator{\F}{\mathscr{F}} 
\newcommand{\Lin}{\ensuremath{\mathsf{Lin}}}
\newcommand{\CIRL}{\ensuremath{\mathsf{CIRL}}}
\newcommand{\kRL}{\ensuremath{k\mhyphen\mathsf{CIRL}}}
\newcommand{\kRLsi}{\ensuremath{\kRL_{\textrm{si}}}}
\newcommand{\kRLfin}{\ensuremath{\kRL_{\textrm{fin}}}}
\newcommand{\kRLsifin}{\ensuremath{(\kRL_{\textrm{si}})_{\textrm{fin}}}}
\newcommand{\FL}{\ensuremath{\mathsf{FL}}}
\newcommand{\FLk }{\ensuremath{\mathsf{FL}^{k}_{ew}}}
\DeclareMathOperator{\dmap}{\tikz[baseline]{\draw[>->] (0,.1) to node {\ensuremath{\mathrm{\scriptstyle{D}}}} (0.6,.1);}}
\newcommand{\Dw}{\ensuremath{D^{\wedge}}}
\newcommand{\Dt}{\ensuremath{D^{\to}}}
\renewcommand{\leq}{\leqslant}
\renewcommand{\geq}{\geqslant}
\DeclareMathOperator{\Sub}{\mathsf{Sub}}
\newcommand{\f}{\varphi}
\begin{document}
\title{Canonical formulas for $k$-potent commutative, integral, residuated lattices}
\author[N. Bezhanishvili]{Nick Bezhanishvili}
\email{N.Bezhanishvili@uu.nl }
\address{ILLC, Universiteit van Amsterdam, Science Park 107, 1098XG Amsterdam, The Netherlands.}
\author[N. Galatos]{Nick Galatos}
\email{ngalatos@du.edu}
\address{Department of Mathematics, University of Denver 2360 S. Gaylord St., Denver, CO 80208, USA}
\author[L. Spada]{Luca Spada}
\email{lspada@unisa.it}
\address{Dipartimento di Matematica, Università degli Studi di Salerno. Via Giovanni Paolo II 132, 84084 Fisciano (SA), Italy.}

\thanks{The second-named author acknowledges the support of the grants Simons Foundation 245805 and FWF project START Y544-N23. The third-named author gratefully acknowledges partial support by the Marie Curie Intra-European Fellowship for the project ``ADAMS" (PIEF-GA-2011-299071) and from the Italian National Research Project (PRIN2010--11) entitled \emph{Metodi logici per il trattamento dell'informazione}.}

\subjclass[2010]{Primary: 03C05. Secondary:   03B20, 03B47 .}

\keywords{Residuated lattices, substructural logics, axiomatisation, canonical formulas.}

\begin{abstract}
Canonical formulas are a powerful tool for studying intuitionistic and modal logics. Indeed, they provide a uniform and semantic way of  axiomatising all extensions of intuitionistic logic and all modal logics above K4. Although  the method originally hinged on the relational semantics of those logics, recently it has been completely recast in algebraic terms. In this new perspective canonical formulas are built from a finite subdirectly irreducible algebra by describing completely the behaviour of some operations and only partially the behaviour of some others. 
In this paper we export the machinery of canonical formulas to \emph{substructural logics} by
introducing canonical formulas for $k$-potent, commutative, integral, residuated lattices ($\kRL$).  We show that any subvariety of $\kRL$ is axiomatised by canonical formulas. The paper ends with some applications and examples.
\end{abstract}

\maketitle

\section{Introduction}\label{s:intro}

The apparatus of canonical formulas is a powerful tool for studying intuitionistic and modal logics. We refer to \cite{CZ97} for the details of this method and its various applications.
This technique  relied crucially on the relational semantics of these logics, but recently an algebraic approach to canonical formulas was developed for intuitionistic and modal logics \cite{BB09, BB12, BB11, BBI14}. In this new perspective, the key step is identifying locally finite reducts of modal and Heyting algebras. 

Recall that a variety  $\mathsf{V}$ of algebras is called \emph{locally finite} if its
finitely generated algebras are finite. Although Heyting algebras are not locally finite, their $\vee$-free  and their $\to$-free reducts are locally finite. 
Based on the above observation, for a finite subdirectly irreducible Heyting algebra $A$, \cite{BB09} defined a formula that encodes fully the structure of the $\vee$-free reduct of $A$, and only partially the behaviour of $\vee$. 
Such formulas are called $(\wedge,\to)$-canonical formulas and all intermediate logics can be axiomatised by collections of them. 
In \cite{BB09}, it was shown, via Esakia duality for Heyting algebras, that $(\wedge,\to)$-canonical formulas are equivalent to Zakharyaschev's canonical formulas. 

Recently, \cite{Bezhanishvili2014a} developed a theory of canonical formulas for intermediate logics based on $\to$-free reducts of Heyting 
algebras. 
For a finite subdirectly irreducible Heyting algebra $A$, \cite{Bezhanishvili2014a} defined the $(\wedge,\vee)$-canonical formula of $A$ that encodes fully the structure of the $\to$-free reduct of $A$, and only partially the behavior of $\to$. 
One of the main results of \cite{Bezhanishvili2014a} is that each intermediate logic is axiomatisable by $(\wedge,\vee)$-canonical formulas.

The study of $(\wedge, \to)$-canonical formulas and $(\wedge,\vee)$-canonical formulas leads to new classes of logics with ``good'' properties.
In particular, $(\wedge, \to)$-canonical formulas give rise to \emph{subframe formulas} and $(\wedge,\vee)$-canonical formulas to 
\emph{stable formulas}. These are the formulas that encode only the  $(\wedge, \to)$ and $(\wedge,\vee)$-structures of $A$, respectively. \emph{Subframe logics} and \emph{stable logics} are intermediate logics axiomatisable by subframe and stable formulas, respectively. 
There is a continuum of subframe and stable logics and all these logics have the finite model property \cite[Ch.~11]{CZ97} and \cite{Bezhanishvili2014a}. Stable modal logics also enjoy the 
bounded proof property \cite{BG14}. 

The algebraic approach to canonical formulas opens the way to exporting this method to other non-classical logics where relational semantics have not yet been developed. In this paper we take the first steps in this direction by introducing canonical formulas for a $k$-potent and commutative extension of the \emph{Full Lambek calculus} $\FL$. A proof theoretic presentation of the basic substructural logic $\FL$ is obtained from Gentzen's sequent calculus for intuitionistic logic by removing all structural rules (exchange, weakening and contraction). A \emph{substructural logic} is then any axiomatic extension of the system $\FL$.  
 The logic $\FLk$ under investigation in this paper is an extension of $\FL$ that satisfies exchange, weakening plus the \emph{$k$-potency axiom}: 
\[\underbrace{\f\cdot\ldots\cdot\f}_{k+1\text{ times}}\leftrightarrow \underbrace{\f\cdot\ldots\cdot\f}_{k\text{ times}}.\]

Relational semantics have been developed for $\FL$ in \cite{GJ13} and have been used effectively to establish a series of results, usually relying on insights from proof theory; see for example \cite{APT:LICS, APT:AU, APT:APAL}. However, due to the lack of distributivity they are not amenable directly and easily to some of the methods used in Kripke semantics for intuitionistic logic; for example, due to the lack of distributivity, relational semantics for $\FL$ need to be two-sorted, namely have two sets of possible worlds. In other words, no standard Kripke-style semantics exists for $\FLk$, thus making the algebraic methods used here a natural tool for our study.

The algebraic semantics of substructural logics, known as (pointed) \emph{residuated lattices} were introduced in the setting of algebra well before the connection to logic was established. Residuated lattices appeared first as lattices of ideals of rings, while other examples include lattice-ordered groups and the lattice of all relations on a set. In view of their connection to substructural logics, certain varieties of residuated lattices constitute algebraic semantics for logics such as relevance logic, linear logic, many-valued logics, Hajek's basic logic and intuitionistic logic (in the form of Heyting algebras) to mention a few. They are also related to mathematical linguistics, to $C^*$-algebras and to theoretical computer science. See \cite{GJKO} for more on residuated lattices and substructral logics.

In this paper we introduce $(\vee, \cdot, 1)$-canonical formulas for commutative, integral, $k$-potent residuated lattices ($\kRL$); see \cref{d:canform}.  
  These formulas encode the $(\vee, \cdot, 1)$-structure of a subdirectly irreducible $\kRL$-algebra fully and the structure of $\to$ and $\wedge$ only partially. 
 The key property that makes our machinery work is that the $(\vee, \cdot, 1)$-reducts of $\kRL$-algebras are locally finite \cite{blok2002finite}.
In \cref{t:equiv-n-potent} we show that every extension of $\FLk $ is axiomatisable by such formulas; the main tool towards this result is \cref{thm:main}, that associates to any formula in the language of residuated lattices, an equivalent (finite) set of $(\vee, \cdot, 1)$-canonical formulas.
The two remaining sections are devoted to applications.  In \cref{sec:stable} we study logics whose corresponding classes of subdirectly irreducible algebras are closed under $(\vee, \cdot, 1)$-subalgebras. We call such logics \emph{stable}, and in \cref{cor: stablefmp} we show that all of them have the finite model property and are axiomatised by special $(\vee, \cdot, 1)$-canonical formulas.
In \cref{sec:examples} we give alternative axiomatisations via, $(\vee, \cdot, 1)$-canonical formulas, of some well-known logics extending $\FLk$.

Recently, a classification of formulas in the language of $\FL$, called \emph{substructural hierarchy}, has been introduced \cite{APT:LICS, APT:APAL}.  The classes of this hierarchy are usually denoted by $\mathcal{P}_{i}$ and $\mathcal{N}_{i}$, with $i$ natural number. Their  structure is similar to the time-honoured arithmetical hierarchy.  Axiomatic extensions of $\FL$ by formulas within the first three levels of the hierarchy ($i=0,1,2$) were proved to be particularly amenable  \cite{APT:LICS, APT:APAL}. There has been partial success in the study of the fourth level of the hierarchy, but no progress has been made on the fifth level and up. It follows from the results in this paper that every extension of $\FLk$ can be axiomatised by formulas within the class $\mathcal{N}_{3}$ in the hierarchy, thus providing hope for their thorough understanding.  We remark that after completing this article we learned about a result of Je\v{r}\'{a}bek on the substructural hierarchy which is more general than ours.  Indeed in \cite{Jerabek:15}  Je\v{r}\'{a}bek shows that using a standard technique in proof complexity called \emph{extension variable}, all extensions of $\FL_{e}$ can be axiomatised using formulas up to the level $\mathcal{N}_{3}$.  It should be noted, however, that very little can be said about  the shape of the formulas obtained  in \cite{Jerabek:15}, while we will see in \cref{d:canform} that all canonical formulas share the same uniform shape.  In addition, as shown in \cref{l:A-refutes-gamma}, canonical formulas have a useful semantic characterisation, which will be often exploited in this paper e.g., for establishing the finite model property for large classes of logics.

\section{Preliminaries}\label{s:prelim}

In this section we recall the definition of (commutative) residuated lattices together with some of their basic properties needed in the remainder of the paper.  We start by fixing some notation for standard concepts in universal algebra.

\begin{notation}[Free algebras and valuations]\label{d:free}
Given a variety $\Va$ of algebras  we denote by $\F_{\Va}(\kappa)$ the \emph{free algebra with $\kappa$ free generators} in $\Va$.   When $\Va$ is clear from the context we will omit it and simply write $\F(\kappa)$.
Given an algebra $A$ in a variety $\Va$, a $\Va$-\emph{valuation} (henceforth simply \emph{valuation}) into $A$ is any $\Va$-homomorphism from the algebra of all terms in the language of $\Va$ into the algebra $A$. Since every such a morphism factors through $\F(\omega)$, up to equivalence, we will also think of valuations into $A$ as $\Va$-homomorphisms  form $\F(\omega)$ into $A$. 
Therefore, there is a one-to-one correspondence between the valuations into $A$ and assignments sending the free generators of $\F(\omega)$ into elements of $A$.  We also identify free $n$-generated algebras and the Lindenbaum-Tarski algebras of provably equivalent classes of formulas in $n$ variables, whence we will use the propositional variables $X_{1},\dots,X_{n}$ to indicate the free generators of $\F(n)$.

\end{notation}
In this article we shall mostly need to consider a finite number $n$ of variables, so by an abuse of notation, we shall also call a valuation into $A$ any $\Va$-homomorphism from $\F(n)$ into $A$.  It remains tacitly understood that any extension of such a homomorphism to the algebra $\F(\omega)$ would suit our needs.

We now turn to a brief description of the algebraic semantics of substructural logics.  Recall that $\FL$ is obtained from Gentzen calculus $\mathsf{LJ}$ by dropping the structural rules. Notice that in this way, one ends up with non-equivalent ways of introducing connectives in the calculus.  This entails that a suitable language for $\FL$ is given by two conjunctions $\cdot$ and $\wedge$, a disjunction $\vee$ (\emph{strong} conjunction $\cdot$ distributes over $\vee$, but the \emph{lattice} conjunction $\wedge$ does not),  two implications $/,\backslash$, and  two constants $\underline{0},\underline{1}$; extensions with two additional constants $\underline{\top}, \underline{\bot}$ for the bounds are also considered. 

The equivalent algebraic semantics (in the sense of \cite{blok1989algebraizable}) of $\FL$ is given by the class of (pointed) residuated lattices (see \cite{GJKO} for more details). 
The associated translations between formulas of the logic and equations of the variety is given by the maps 
\begin{align*}
\phi \qquad\qquad&\longmapsto &(1 \leq \phi), \\
(s = t) \qquad\qquad&\longmapsto& (s \leftrightarrow t).
\end{align*} 
Here we identify logical connectives by the corresponding operation symbols in algebra, and logical propositional formulas with algebraic terms. 
Further, since we will be concerned only with extensions that include exchange, only one implication is needed, so we give our definitions in this simpler setting.

\begin{definition}[Residuated lattices]
A commutative residuated lattice is an algebra $\langle A,\cdot,$ $\rightarrow, \wedge,\vee, 1\rangle$ such that
\begin{enumerate}
\item $\langle A,\cdot, 1\rangle$ is a commutative \emph{monoid} i.e., $\cdot$ is commutative, associative and has $1$ as neutral element.
\item $\langle A, \wedge,\vee \rangle$ is a lattice.
\item $\rightarrow$ is the residual of $\cdot$, i.e., 
\[x\cdot y\leq z \quad\text{ iff }\quad y\leq x\rightarrow z,\]
where $x\leq y$ is an abbreviation for $x\wedge y=x$.
\end{enumerate}
A residuated lattice is called: 
\begin{enumerate}[a)]
\item \emph{bounded} if in the order $\leq$ there exist a largest and a least element, denoted by $\top$ and $\bot$, respectively,
\item \emph{integral} if it is bounded and $\top=1$.
\end{enumerate}
A \emph{pointed} residuated lattice is an expansion of a residuated lattice with an additional constant $0$. The constant can be evaluated in an arbitrary way and it is used to define the operation of negation. 
\end{definition}
Notice that despite the presentation given above, residuated lattices form a variety (see \cite[Theorem 2.7]{GJKO} for an equational axiomatisation). One can also easily see that multiplication preserves the order and that it actually distributes over join. If $a$ is an element of a residuated lattice, we write $a^{k}$ for the $k$-fold product $a\cdot\ldots\cdot a$ and $a \leftrightarrow b$ for $(a\rightarrow b) \wedge (b \rightarrow a)$.

Since residuated lattices form the algebraic semantics of $\FL$, an immediate application of \cite[Theorem 4.7]{blok1989algebraizable} tells us that all substructural logics are algebraizable and their equivalent algebraic semantics correspond to subvarieties of (pointed) residuated lattices.  In particular, if $\mathsf{L}$ and $\mathbb{V}_{\mathsf{L}}$ are a logic and a variety that correspond in this way, we have that, for any propositional formula/term $\phi$, $\mathsf{L} \vdash \phi$ iff $\mathbb{V}_{\mathsf{L}} \models \phi\geq1$. Here, as usual, the former means that $\phi$ is a theorem of the logic $\mathsf{L}$, while the latter means that $A \models \phi\wedge 1 =1$ for each $A \in \mathbb{V}_{\mathsf{L}}$.

Let $A$ be a residuated lattice, $v$ a valuation on $A$, and $\f(X_{1},\dots,X_{n})$  a formula in the language of $\FL$. We will write $A\not\models v(\f(X_{1},\dots,X_{n}))$ if $A\models v(\f) < 1_{A}$.  We will write $A\not\models\f(X_{1},\dots,X_{n})$ if there exists a valuation $v$ such that $A\not\models v(\f(X_{1},\dots,X_{n}))$.
In this article we will be mainly concerned with the calculus $\FLk$ which is given by $\FL$ plus exchange $\f\cdot \psi\leftrightarrow \psi\cdot \f$, weakening $\phi \leftrightarrow (\phi \wedge 1)$ and \emph{$k$-potency} $\phi^k \leftrightarrow \phi^{k+1}$.
The equivalent algebraic semantics for $\FLk$ is provided by (pointed) \emph{commutative ($x \cdot y= y \cdot x$), integral ($x \leq 1$), $k$-potent}($x^{k}=x^{k+1}$) residuated lattices; such a class of structure will be denoted by $\kRL$.  Since the results of this paper work independently of the inclusion or not of the constant $0$ in the type we will be informal and drop the adjective `pointed' when we refer to the algebraic semantics for $\FLk$, relying on the reader to fix the correct type on the algebraic or the logical side (so one may  consider either pointed residuated lattices or $\FLk\leq$ without $0$).
\begin{notation}\label{not:RL}
We will denote by {\kRL} the varieties of $k$-potent, commutative, integral, residuated lattices for $k$ ranging among natural numbers.  We will write {\kRLsi} for the class of subdirectly irreducible algebras in {\kRL} and {\kRLfin} for the class of finite algebras in {\kRL}.
\end{notation}

The last part of this section is devoted to recall some result regarding subdirectly irreducible residuated lattices that will be useful in the reminder of the paper.
Recall that an algebra $A$ is called subdirectly irreducible if, whenever it embeds into a direct product of algebras, in such a way that the compositions of the canonical projections with the embedding are still surjective, then $A$ must be isomorphic to one of the algebras in the product (see \cite[Section II.8]{MR648287} for further information).

\begin{definition}[Subcover, coatom, and completely join irreducible] 
Let $\langle A, \leq \rangle$ be any partially ordered set.  If $a,b\in A$, $a< b$ and there is no $c\in A$ such that $a< c<b$ we say that $a$ is a \emph{subcover} of $b$.  If $A$ has a top element, then any subcover of it is called  \emph{coatom}.  
Finally, an element $a\in A$ is said to be \emph{completely join irreducible} if, whenever $a=\bigvee_{i\in I}a_{i}$ with $a_{i}\in A$ there exists $i\in I$ such that $a=a_{i}$.
\end{definition}

\begin{lemma}\mbox{\cite[Lemma 3.59]{GJKO}}\label{lem1}
A finite commutative, integral, residuated lattice is subdirectly irreducible if, and only if, if $1$ is completely join irreducible.
\end{lemma}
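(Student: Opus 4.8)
The plan is to translate the statement about congruences into one about \emph{deductive filters} of $A$ --- the subsets $F\seq A$ with $1\in F$ that are upward closed and closed under $\cdot$. For commutative integral residuated lattices these form a lattice isomorphic to the congruence lattice of $A$, with the identity congruence corresponding to $\{1\}$, meets corresponding to intersections, and the filter attached to a congruence $\theta$ being $\{a\in A:(a,1)\in\theta\}$ (see \cite{GJKO}). Hence a nontrivial $A$ is subdirectly irreducible exactly when it has a \emph{least} deductive filter strictly above $\{1\}$. Two finiteness remarks will be used repeatedly: the deductive filter generated by an element $a\leq 1$ is ${\uparrow}a^{m}$ for any $m$ with $a^{m}=a^{m+1}$ (such an $m$ exists, and $a^{m}\leq a^{n}$ for all $n$ by integrality); and, in a finite lattice, ``$1$ is completely join irreducible'' means exactly ``$1$ has a unique subcover'', i.e.\ $A$ has exactly one coatom. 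So the lemma becomes: a nontrivial finite commutative integral residuated lattice has a least nontrivial deductive filter iff it has exactly one coatom. I will assume $A$ nontrivial.

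For the ``if'' direction, let $c$ be the unique coatom and let $F\neq\{1\}$ be any deductive filter. Choosing $a\in F$ with $a\neq 1$, integrality gives $a<1$, so in the finite lattice $A$ the element $a$ lies below some coatom, which must be $c$; hence $c\in F$ by upward closure. Thus ${\uparrow}c^{m}$, the deductive filter generated by $c$, is contained in every deductive filter other than $\{1\}$ and is itself nontrivial, so it is the least nontrivial deductive filter and $A$ is subdirectly irreducible.

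For the ``only if'' direction I argue contrapositively. If $1$ is not completely join irreducible then, $A$ being a finite lattice, there are two distinct coatoms $c_{1},c_{2}$, and necessarily $c_{1}\vee c_{2}=1$. Let $F_{i}={\uparrow}c_{i}^{m}$ be the deductive filter generated by $c_{i}$; both are nontrivial since $c_{i}<1$. The point is that $F_{1}\cap F_{2}=\{1\}$: since $\cdot$ is commutative and distributes over $\vee$, for every $N$ we have $1=(c_{1}\vee c_{2})^{N}=\bigvee_{i+j=N}c_{1}^{i}c_{2}^{j}$, and taking $N=2m-1$ forces $i\geq m$ or $j\geq m$ in each summand, so by integrality every summand lies below $c_{1}^{m}$ or below $c_{2}^{m}$; therefore $c_{1}^{m}\vee c_{2}^{m}=1$, whence $F_{1}\cap F_{2}={\uparrow}c_{1}^{m}\cap{\uparrow}c_{2}^{m}={\uparrow}(c_{1}^{m}\vee c_{2}^{m})=\{1\}$. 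Consequently $\{1\}$ is the meet of two strictly larger deductive filters, i.e.\ the identity congruence is the meet of two nontrivial congruences, so $A$ is not subdirectly irreducible.

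The only real computation is the identity $c_{1}^{m}\vee c_{2}^{m}=1$ obtained from the distributivity of $\cdot$ over $\vee$; everything else is the standard congruence--filter dictionary together with elementary facts about finite lattices, so I expect no serious obstacle, only the need to set up (or quote) that dictionary carefully.
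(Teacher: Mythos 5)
Your proof is correct. The paper itself offers no argument for this lemma --- it is quoted verbatim from \cite[Lemma 3.59]{GJKO} --- so the comparison is with the standard proof, which is essentially what you give: pass to the lattice of deductive filters (upward-closed submonoids), identify subdirect irreducibility of a nontrivial finite algebra with the existence of a least nontrivial filter, and note that in a finite lattice complete join irreducibility of $1$ amounts to having a unique coatom. Both directions are sound: the generated filter ${\uparrow}a^{m}$ with $a^{m}=a^{m+1}$ exists by finiteness and is closed under products since $x,y\geq a^{m}$ gives $xy\geq a^{2m}=a^{m}$; and your key identity $c_{1}^{m}\vee c_{2}^{m}=1$, extracted from $(c_{1}\vee c_{2})^{2m-1}=\bigvee_{i+j=2m-1}c_{1}^{i}c_{2}^{j}$ with $i\geq m$ or $j\geq m$ in each joinand, is precisely the computation the paper itself redeploys (with exponent $t=m+n-1$) in the proof of \cref{t:s-i-opremum} to push the characterisation to infinite $k$-potent algebras. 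The only point to make explicit if you write this up is the edge case of the trivial algebra, which you correctly set aside, and the (routine) fact that the intersection of two deductive filters is the meet in the filter lattice, so ${\uparrow}c_{1}^{m}\cap{\uparrow}c_{2}^{m}=\{1\}$ really does exhibit the identity congruence as a meet of two strictly larger congruences.
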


The crucial reason for which we restrict to $k$-potent structures is that the above characterisation extends to infinite algebras.  This is observed without a proof in the paragraph subsequent to  \cite[Lemma 3.60]{GJKO}, so we spell out the details here for the sake of completeness.  Before turning to the proof we observe that in the infinite case, having a unique coatom does not imply 1 to be completely join irreducible, for there still can be an infinite chain of elements whose supremum is 1 without any of them being equal to 1.
\begin{remark}\label{r:join-irreducible}
Notice that $1$ is completely join irreducible in an integral commutative residuated lattice $A$ if, an only if, $A$ has a second-greatest element, namely if there is an $a \in A$ such that $\{x \in A : x \not = 1\}=\{x \in A : x \leq a\}$.  For the non-trivial direction, suppose that 1 is completely join irreducible.  If a coatom exists, then it must be unique, for if $a,b$ are two distinct coatoms then $a\vee b=1$, while $a,b\neq 1$, against the completely join irreducibility of 1. If there are no coatoms, then for any $a_{i}\in A$ with $a_{i}\neq 1$ there exists $a_{i+1}$ such that $a_{i}<a_{i+1}<1$.  This would give a sequence of elements, all different from 1, whose supremum is 1, again contradicting the completely join irreducibility of 1.  
\end{remark}

\begin{theorem}\label{t:s-i-opremum}
An algebra $A\in\kRL$ is subdirectly irreducible if, and only if, $1$ is completely join irreducible, or equivalently, if, and only if, $A$ has a second-greatest element.
\end{theorem}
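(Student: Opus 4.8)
The equivalence between the two displayed conditions — ``$1$ completely join irreducible'' and ``$A$ has a second-greatest element'' — is exactly \cref{r:join-irreducible}, so the plan is to prove that each of them is equivalent to subdirect irreducibility, translating throughout between congruences of $A$ and deductive filters (order filters closed under $\cdot$). I would use the standard facts that a congruence $\theta$ is trivial iff its $1$-class $[1]_{\theta}$ is $\{1\}$, that $A$ is subdirectly irreducible iff it has a monolith (a least non-trivial congruence), equivalently a least deductive filter strictly above $\{1\}$, and that — here $k$-potency enters — the deductive filter generated by an element $a$ is the principal up-set $\uparrow a^{k}$, since the descending chain $a\geq a^{2}\geq\cdots$ stabilises at $a^{k}$; moreover $\{a^{k}:a\in A\}$ is precisely the set of idempotents. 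The finite case is \cref{lem1}; the aim is a uniform argument.

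The direction ``second-greatest element $s$ $\Rightarrow$ subdirectly irreducible'' needs no $k$-potency: I would show $\theta(s,1)$ is the monolith. Given any non-trivial congruence $\theta$, pick $u\neq v$ with $(u,v)\in\theta$; then $c:=(u\to v)\wedge(v\to u)\neq 1$ and $(c,1)\in\theta$ (congruences respect $\to$ and $\wedge$), and since $c\leq s$, applying $\theta$ to $(c,1)$ through $s\vee(-)$ yields $(s,1)=(s\vee c,\,s\vee 1)\in\theta$. Hence every non-trivial congruence contains the non-trivial congruence $\theta(s,1)$.

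For the converse I would first extract from the monolith $\mu$ an idempotent $e_{0}<1$ with $a^{k}\leq e_{0}$ for all $a\neq 1$: writing $\mu=\theta(c,1)$ for a suitable $c\neq 1$ (any $c=(u\to v)\wedge(v\to u)$ with $(u,v)\in\mu$, $u\neq v$, works, since then $\theta(c,1)$ is a non-trivial congruence contained in $\mu$), the monolith filter is $\uparrow c^{k}$; put $e_{0}:=c^{k}$, which is idempotent and $<1$. Since every $\theta(a,1)$ with $a\neq 1$ is non-trivial, it contains $\mu$, so $e_{0}\in\uparrow a^{k}$, i.e.\ $a^{k}\leq e_{0}$. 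The heart of the proof is then the claim that $A\setminus\{1\}$ has a maximum. Suppose not; by \cref{r:join-irreducible}, $1$ is not completely join irreducible, so some join of elements $\neq 1$ equals $1$, which forces $1$ to be the only upper bound of $A\setminus\{1\}$, i.e.\ $1=\bigvee(A\setminus\{1\})$. I would first check that $A\setminus\{1\}$ is closed under $\vee$: if $a\vee a'=1$ with $a,a'\neq 1$, then expanding $1=(a\vee a')^{k}$ by distributivity of $\cdot$ over $\vee$ gives $1=\bigvee_{j=0}^{k}a^{j}(a')^{k-j}$, where each summand is $\leq a$ (if $j\geq 1$) or $\leq e_{0}$ (if $j=0$); hence $1\leq a\vee e_{0}$. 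But the same expansion applied to $(a\vee e_{0})^{k}$, using that $e_{0}$ is idempotent, gives $(a\vee e_{0})^{k}\leq e_{0}<1$, whence $a\vee e_{0}<1$ — contradicting $1\leq a\vee e_{0}$. Thus $A\setminus\{1\}$ is up-directed with join $1$. Using that $x\cdot(-)$ preserves all existing joins (it has the right adjoint $x\to(-)$), an induction on $k$ gives $1=\bigvee\{a_{1}\cdots a_{k}:a_{1},\dots,a_{k}\in A\setminus\{1\}\}$; but by up-directedness each such product is $\leq a_{*}^{k}\leq e_{0}$ for a suitable $a_{*}\neq 1$, so $1\leq e_{0}<1$ — the desired contradiction. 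Since $A\setminus\{1\}$ therefore has a maximum, $1$ is completely join irreducible by \cref{r:join-irreducible}.

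The crux, and the only place where $k$-potency is genuinely used, is this last step: the monolith only controls the $k$-th powers of non-top elements (they lie below $e_{0}<1$), not the elements themselves, so $e_{0}$ need not be the second-greatest element; the remedy is to combine the multiplicative ``collapse'' of $k$-fold products of an up-directed family with the infinite distributivity $x\cdot\bigvee=\bigvee x\cdot(-)$ to transport the bound from powers to the join of the whole family. For $k=1$ everything degenerates — then $a=a^{1}\leq e_{0}$ outright, so $e_{0}$ is already the second-greatest element — and the real work is for $k\geq 2$, where even the up-directedness of $A\setminus\{1\}$ must be wrung out of $k$-potency via the expansion of $(a\vee a')^{k}$.
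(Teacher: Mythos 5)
Your proposal is correct, and while it reuses the paper's two central ingredients --- the element $e_{0}<1$ dominating all $k$-th powers of non-top elements, and the distributivity of $\cdot$ over existing joins to expand a $k$-th power of a join --- it organises the hard direction differently. The paper imports the existence of such an element from \cite[Lemma 3.58]{GJKO} and then argues by cases on coatoms: two distinct coatoms are impossible, and the absence of coatoms is refuted by producing a strictly ascending chain $D$ of non-top elements with $\bigvee D=1$ and computing $(\bigvee D)^{k}$. You instead extract $e_{0}=c^{k}$ directly from the monolith via the congruence--filter correspondence, first prove well-connectedness (closure of $A\setminus\{1\}$ under $\vee$, via the two expansions $1\leq a\vee e_{0}$ and $(a\vee e_{0})^{k}\leq e_{0}$), and then run the $k$-th-power computation a single time on $\bigvee(A\setminus\{1\})=1$, using up-directedness to bound every $k$-fold product by some $a_{*}^{k}\leq e_{0}$. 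This buys two things: the argument is self-contained (you also prove the easy direction directly by exhibiting $\theta(s,1)$ as the monolith, rather than citing \cite[Lemma 3.59]{GJKO}), and it sidesteps the delicate point of exhibiting a chain whose supremum is $1$ --- in a lattice where arbitrary joins need not exist this requires some care, whereas your observation that ``no maximum of $A\setminus\{1\}$'' forces $1$ to be the \emph{only} upper bound, hence the least one, makes the existence of $\bigvee(A\setminus\{1\})$ immediate. The paper's route is shorter on the page precisely because it outsources the subdirect irreducibility criterion to \cite{GJKO}; yours is longer but covers uniformly the configuration of a unique coatom that fails to be the second-greatest element, which the paper's coatom dichotomy treats only implicitly.
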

\begin{proof}
The proof that $1$ is completely join irreducible if, and only if, $A$ has a second-greatest element is the content of \cref{r:join-irreducible}.

For the left-to-right implication, suppose $A\in\kRL$ is subdirectly irreducible.  If $A$ is trivial then the claim follows.  
Otherwise, suppose $A$ has no second-greatest element.  As observed in \cite[p.\ 261]{GJKO} if  $a$ and $b$ are two different coatoms in a subdirectly irreducible commutative, integral, residuated lattice $A$, then by \cite[Lemma 3.58]{GJKO} there exists $z < 1$ and natural numbers $m,n$ such that $a^{m} \leq z$ and $b^{n} \leq z$. Let $t=m+n-1$. By the distributivity of $\cdot$ over $\vee$, $(a \vee b)^{t} =\bigvee_{r+s=t}(a^{r} \cdot b^{s})$, where clearly $r,s$ are natural numbers. Note that we cannot have both $r<m$ and $s<n$, since then $r+s< n+m+1=t$. If $r\geq m$ then $a^{r}\cdot b^{s} \leq a^{r} \leq a^{m} \leq z$, while if $s\geq n$, then $a^{r}\cdot b^{s} \leq b^{s} \leq b^{n} \leq z$. Hence $(a\vee b)^{t} \leq z < 1$, which contradicts the fact that $a$ and $b$ are coatoms.

If there are no coatoms at all, then there exists a strictly ascending chain $D$ of elements different from 1, such that $\bigvee D=1$.  Then $(\bigvee D)^{k}=1$.  By residuation $\cdot$ distributes over arbitrary joins, so 
\[\left(\bigvee D\right)^{k}= \left(\bigvee D\right)\left(\bigvee D\right)^{k-1}= \bigvee_{d\in D}\left\{d\cdot \left(\bigvee D\right)^{k-1}\right\},\]
and iterating this we arrive at
\begin{equation}
\label{eq:1}\left(\bigvee D\right)^{k}=\bigvee\left\{{\pi(1)}\cdot \ldots\cdot {\pi(k)}\mid \pi\in \Pi\right\},
\end{equation}
where $\Pi$ is the set of all functions from $k$ into $D$. By commutativity, we can assume ${\pi(1)}\leq\dots\leq {\pi(k)}$.  
For each $i, j\leq k$ we have $\pi(j) \leq 1$ by integrality and further  $\pi({i})\cdot \pi({j})\leq \pi({i})$ by the order preservation of multiplication.  So, for every fixed $\pi$ each factor in the join in \eqref{eq:1} is smaller than ${\pi(k)}^{k}$. 
By \cite[Lemma 3.58]{GJKO} a commutative and integral residuated lattice $A$ is subdirectly irreducible if, and only if, there is an element $a\in A$, $a \not = 1$ such that, for any $b\in A$, $b \not = 1$ there is a natural number $n$ for which $b^{n}\leq a$, and we can take $n$ to be at least $k$, without loss of generality.  Since $A$ is $k$-potent we can actually take $n$ to be equal to $k$.  So, there is an element $a \not = 1$ such that for each $\pi\in \Pi$, ${\pi(k)}^{k}\leq a$. Whence,
 \[\left(\bigvee D\right)^{k} \leq \bigvee\Big\{{\pi(k)}^{k}\mid \pi\in \Pi\Big\}\leq a<1\]  
which contradicts the initial assumption that $\left(\bigvee D\right)^{k}=1$.
Finally, for the right-to-left direction, recall that if the top element 1 in $A$ is completely join irreducible, then by \cite[Lemma 3.59]{GJKO} $A$ is subdirectly irreducible.
\end{proof}

\begin{definition}
A residuated lattice $A$ is said to be \emph{well-connected} if for all $x,y\in A$, $x\vee y=1$ implies $x=1$ or $y=1$.
\end{definition}

\begin{lemma}\label{l:well-connect-subdirectly}
Let $A\in\kRLfin$. Suppose that $B\in\CIRL$ is well-connected, and $h\colon A\to B$ is an injective map that preserves $1$ and binary join in $A$ i.e., $h(1_A) = 1_B$ and  for each $a,b\in A$ we have $h(a\vee b ) = h(a) \vee h(b)$.  Then $A$ is also well-connected, and if it is non-trivial, it is also subdirectly irreducible.
\end{lemma}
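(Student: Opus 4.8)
The plan is to establish the two assertions separately, the first feeding into the second.

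\emph{Well-connectedness.} I would take a putative witness: elements $a,b\in A$ with $a\vee b=1_A$, and transport it along $h$. Since $h$ preserves binary join and the constant $1$, we have $h(a)\vee h(b)=h(a\vee b)=h(1_A)=1_B$. As $B$ is well-connected this forces $h(a)=1_B$ or $h(b)=1_B$; in either case the value equals $h(1_A)$, so injectivity of $h$ yields $a=1_A$ or $b=1_A$. Hence $A$ is well-connected. Note that only preservation of $1$ and of binary join, together with injectivity, are used here — in particular, $h$ need not be a homomorphism and no property of $B$ beyond well-connectedness is invoked.

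\emph{Subdirect irreducibility.} Now assume $A$ is non-trivial. I would show that, because $A$ is finite, well-connectedness of $A$ upgrades to complete join irreducibility of $1_A$. Indeed, suppose $1_A=\bigvee_{i\in I}a_i$. Since $A$ is finite, the supremum is already attained on a finite subset of $I$, and this subset is non-empty because the empty join is the least element $\bot_A\neq 1_A$ (this is the only place non-triviality enters); so $1_A=a_{i_1}\vee\cdots\vee a_{i_m}$ with $m\geq 1$. A straightforward induction on $m$, applying well-connectedness at each step to split off one joinand, produces an index $j$ with $a_{i_j}=1_A$. Thus $1_A$ is completely join irreducible, and since $A$ is a finite commutative, integral, residuated lattice, \cref{lem1} (equivalently \cref{t:s-i-opremum}) gives that $A$ is subdirectly irreducible.

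I do not anticipate a genuine obstacle: the heart of the argument is the one-line transfer of the join irreducibility of the monoid unit along $h$, and the only mildly delicate points are reducing an arbitrary join in $A$ to a finite one — which is exactly what finiteness of $A$ buys us — and excluding the degenerate one-element algebra, which the non-triviality hypothesis handles. As an alternative for the second part one could argue purely order-theoretically via \cref{r:join-irreducible}, observing that a finite well-connected lattice has a second-greatest element, but the direct appeal to \cref{lem1} is the most economical route.
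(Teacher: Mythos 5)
Your proof is correct and follows essentially the same route as the paper's: transport a join decomposition of $1_A$ along $h$ using preservation of $\vee$ and $1$ plus injectivity, then use finiteness and non-triviality to upgrade well-connectedness of $A$ to complete join irreducibility of $1_A$ and conclude via \cref{t:s-i-opremum}. The only difference is that you spell out the finite-induction step and the empty-join caveat that the paper leaves implicit.
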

\begin{proof}
Suppose that $a,b\in A$ are such that $a\vee b=1_A$, then $h(a)\vee h(b)=h(a\vee b)=h(1_A)=1_B$, and since $B$ is well-connected either $h(a)=1_B$ or $h(b)=1_B$. Since $h$ is injective either $a$ or $b$ must be equal to $1_A$. Finally, since $A$ is finite, well-connected and non-trivial, $1$ is completely join irreducible. By \cref{t:s-i-opremum}, we conclude that  $A$ is subdirectly irreducible. 
\end{proof}

As in the above lemma, in the rest of the article we will need to consider maps between algebras that do not preserve the full signature.  It is then useful to establish a piece of notation for these maps.  
\begin{definition}
Given a signature including the symbols $*_{1},\dots,*_{n}$ and algebras $A$ and $B$  in this signature, we will indicate the fact that a map $f\colon A\to B$ preserves the operations $*_{1},\dots,*_{n}$ by saying that $f$ is a $(*_{1},\dots,*_{n})$-ho\-mo\-morphism, without any further assumption for the remaining operations.  If $f$ is an embedding, we say that $A$ is a $(*_{1},\dots,*_{n})$-subalgebra of $B$.
\end{definition}

\section{Canonical formulas for $k$-potent, commutative, integral, residuated lattices}\label{s:n-pot-RL}
\subsection{$(\cdot,\vee, 1)$-canonical formulas}\label{ss:dot-vee-can-form}
 In this section we introduce $(\cdot,\vee, 1)$-ca\-no\-nical formulas and show that every extension of {\FLk} is axiomatisable by $(\cdot,\vee, 1)$-canonical formulas. 

We first prove an essentially known result about sum-idempotent multiplication $k$-potent commutative semirings. An i-semiring (from idempotent semiring) is an algebra $\langle A,\cdot, \vee, 1\rangle$ where $\langle A, \cdot, 1 \rangle$ is a monoid, $\langle A, \vee \rangle$ is a semilattice and $\cdot$ distributes over $\vee$. An i-semiring is called commutative if $\cdot$ is commutative and $k$-potent if $\cdot$ is $k$-potent. 
\begin{lemma}\label{lem:M(n)} 
Given a commutative $k$-potent i-semiring $B$ and a finite subset $S$ of $B$, the subalgebra $\langle S \rangle$ generated by $S$ has at most $2^{(k+1)^{|S|}}$-many elements. So, the maximal size $M(n)$ of an $n$-generated subalgebra of a commutative $k$-potent i-semiring is at most $2^{(k+1)^n}$.
\end{lemma}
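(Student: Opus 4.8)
The plan is to bound the number of distinct elements of $\langle S\rangle$ by showing that every element of $\langle S\rangle$ can be written in a normal form indexed by a bounded amount of data. First I would observe that, since $\cdot$ distributes over $\vee$ and $\langle A,\vee\rangle$ is a semilattice, every element of $\langle S\rangle$ is a finite join of products of elements of $S$; moreover, by commutativity each such product is determined by the multiset of elements of $S$ it involves, and by $k$-potency no factor needs to appear more than $k$ times (any higher power collapses to the $k$-th power, using $x^{k}=x^{k+1}$ and hence $x^{k}=x^{k+\ell}$ for all $\ell\ge 0$). Therefore each product is determined by a function $S\to\{0,1,\dots,k\}$ recording, for each $s\in S$, how many times it occurs (capped at $k$); there are at most $(k+1)^{|S|}$ such functions, so at most $(k+1)^{|S|}$ distinct ``monomials'' arise.

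Next I would note that every element of $\langle S\rangle$ is a join of a subset of this finite set of monomials: joins of joins are joins, and a product of two joins of monomials is, by distributivity, again a join of monomials. Since the set of monomials has size at most $(k+1)^{|S|}$, its subsets number at most $2^{(k+1)^{|S|}}$, and distinct elements of $\langle S\rangle$ correspond to (at most) distinct such joins. One must be slightly careful that the empty product, namely $1$, is included among the monomials (it corresponds to the zero function $S\to\{0,\dots,k\}$), so that $1\in\langle S\rangle$ is accounted for; this is harmless and does not change the count. Putting these observations together yields $|\langle S\rangle|\le 2^{(k+1)^{|S|}}$, and specialising to $|S|=n$ gives $M(n)\le 2^{(k+1)^{n}}$.

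The only genuinely delicate point — and the step I expect to be the main obstacle, mild as it is — is justifying that capping exponents at $k$ is legitimate even inside a large product: from $x^{k}=x^{k+1}$ one gets $x^{k}=x^{k+j}$ for every $j\ge 0$ by induction, and then for a monomial $\prod_{s\in S}s^{e_{s}}$ one replaces each $e_{s}$ by $\min(e_{s},k)$ using commutativity to isolate the relevant factor and the identity just derived; associativity and commutativity of $\cdot$ make this rearrangement valid. Everything else is a routine counting argument, so I would keep the write-up brief: state the monomial normal form, bound the number of monomials by $(k+1)^{|S|}$, bound the number of joins of monomials by $2^{(k+1)^{|S|}}$, and conclude.
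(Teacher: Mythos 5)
Your proof is correct and follows essentially the same route as the paper: the paper also writes $\langle S\rangle = J(Pr(S))$, where $Pr(S)$ is the set of monomials $s_1^{p_1}\cdots s_n^{p_n}$ with $0\leq p_i\leq k$ (your exponent functions capped at $k$) and $J(T)$ is the set of joins of subsets of $T$, and then counts $|Pr(S)|\leq (k+1)^n$ and $|J(T)|\leq 2^{|T|}$. Your additional remarks on justifying the exponent cap and on the empty product being $1$ are fine elaborations of what the paper states as ``clear''.
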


\begin{proof} We assume that $S=\{s_1, \ldots , s_n\}$.
Since multiplication distributes over join, is commutative, and $k$-potent we have $\langle S \rangle = J (Pr (S))$, where $Pr(S) = \{s_1^{p_1} \dots s_n^{p_n}  \mid  0 \leq p_i \leq k$, for $1 \leq i \leq n\}$, and $J(T)=\{\bigvee T_0 \mid T_0 \subseteq T\}$, for $T$ a finite subset of $B$. It is then clear that $|Pr(S)| \leq  (k+1)^n$ and that $|J(T)|\leq |\mathcal{P}(T)|=2^{|T|}$. Thus, $|\langle S \rangle| = |J (Pr (S))| \leq 2^{|Pr(S)|} \leq 2^{(k+1)^n}$.
\end{proof}

 The next lemma was first observed in \cite[Theorem 4.2]{blok2002finite}, we recast it in a way that is expedient to our needs. 

Given a formula $\f$, we denote by $\Sub(\f)$ the collection of all of its subformulas. Further, for an algebra $A$ and a valuation $v$ into $A$, we denote by $\Sub_{v}(\f)$ the set $v[\Sub(\f)]$ of all images in $A$ of subformulas of $\f$. Note that $|\Sub_v(\f)| \leq |\Sub(\f)|$, since some subformulas may attain the same value. 
Given an algebra $B$ and a subset $S$ of $B$, the relational structure that is obtained by the restriction of the operations (viewed as relations) of $B$ on $S$ is called a partial subalgebra of $B$; as it is fully determined by $S$, we will also call it $S$. So, if $f^B$ is an $n$-ary operation on $B$ then $f^B \cap S^{n+1}$ will be an $(n+1)$-ary relation on the subalgebra $S$. Note that all these relations are single-valued but may not be total relations, namely they are partial operations on $S$.

\begin{lemma}\label{lem:4.4}
Let $\f$ be a formula, $B\in\kRL$ and $v$ a valuation into $B$ such that $B\not \models v(\f)$. The partial subalgebra $\Sub_{v}(\f)$ of $B$ can be extended to a finite algebra $A$ in $\kRL$ such that $A$ is a $(\cdot,\vee, 1)$-subalgebra of $B$ and $A\not\models\f$. 
\end{lemma}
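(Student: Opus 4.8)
The plan is to build the finite algebra $A$ inside $B$ by closing $\Sub_v(\f)$ under the locally finite fragment of the signature, then use residuation in $B$ to recover the remaining operations in a way that keeps us inside $\kRL$ and still refutes $\f$. First, let $S=\Sub_v(\f)$ and let $A_0=\langle S\rangle$ be the $(\cdot,\vee,1)$-subalgebra of $B$ generated by $S$. By \cref{lem:M(n)} (applied to the i-semiring reduct of $B$, which is indeed a commutative $k$-potent i-semiring since $B\in\kRL$), $A_0$ is finite, with at most $2^{(k+1)^{|S|}}$ elements. Since $\cdot$, $\vee$ and $1$ are inherited from $B$, $A_0$ is literally a $(\cdot,\vee,1)$-subalgebra of $B$; what is missing is that $A_0$ need not be closed under $\wedge$ and $\to$.

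Next I would equip the finite join-semilattice $\langle A_0,\vee\rangle$ with a lattice structure by defining, for $a,b\in A_0$, the meet $a\wedge^{A_0} b := \bigvee^{A_0}\{c\in A_0 : c\le a,\ c\le b\}$ (the join in $A_0$ of the set of common lower bounds, which is nonempty as it contains no element in general but one should instead adjoin a bottom — here integrality and finiteness of $A_0$ guarantee a bottom element is the join of the empty set, so this is well defined). Since $A_0$ is finite, this makes $\langle A_0,\wedge^{A_0},\vee\rangle$ a lattice, though a priori $\wedge^{A_0}$ differs from $\wedge^B$. Similarly define the residual internally: for $a,b\in A_0$ set $a\to^{A_0} b := \bigvee^{A_0}\{c\in A_0 : a\cdot c\le b\}$. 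One checks the residuation law $x\cdot y\le z \iff y\le x\to^{A_0} z$ holds in $\langle A_0,\cdot,\to^{A_0},\wedge^{A_0},\vee,1\rangle$ using distributivity of $\cdot$ over $\vee$ (which holds in $A_0$ since it is inherited from $B$) and finiteness; commutativity, associativity, integrality ($a\le 1$, since $A_0\seq B$ and $1=\top^{A_0}$ is forced by finiteness plus $a\le 1$ in $B$), and $k$-potency all transfer from $B$ directly because $\cdot$ and the order-via-$\vee$ are the same as in $B$. Hence $A:=\langle A_0,\cdot,\to^{A_0},\wedge^{A_0},\vee,1\rangle$ is a finite member of $\kRL$ and, by construction, a $(\cdot,\vee,1)$-subalgebra of $B$.

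It remains to see that $A\not\models\f$. Define a valuation $w$ into $A$ by $w(X_i)=v(X_i)$ for each variable $X_i$ occurring in $\f$ (these lie in $S\seq A_0$). The key claim, proved by induction on the structure of subformulas $\psi$ of $\f$, is that $w(\psi)=v(\psi)$, i.e. the computation of $\psi$ in $A$ reproduces its value in $B$. For the $\cdot$, $\vee$, $1$ cases this is immediate since those operations agree. For $\psi=\alpha\wedge\beta$: by induction $w(\alpha)=v(\alpha)$ and $w(\beta)=v(\beta)$ lie in $S$, so $v(\psi)=v(\alpha)\wedge^B v(\beta)\in S$ as well; now $v(\psi)$ is a common lower bound of $v(\alpha),v(\beta)$ lying in $A_0$, and it is the greatest such in $B$, hence the greatest such in $A_0$, so it equals $w(\alpha)\wedge^{A_0} w(\beta)=w(\psi)$. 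The case $\psi=\alpha\to\beta$ is analogous: $v(\psi)=v(\alpha)\to^B v(\beta)\in S$, it satisfies $v(\alpha)\cdot v(\psi)\le v(\beta)$, and it is the largest element of $B$ with that property, hence the largest in $A_0$, so it equals $w(\alpha)\to^{A_0} w(\beta)$. This is the step I expect to be the main obstacle — it is essential that $S$ be closed under taking these $B$-values, which is why one takes $S=\Sub_v(\f)$ rather than an arbitrary generating set, and one must be careful that the internally defined $\wedge^{A_0}$ and $\to^{A_0}$ agree with $\wedge^B,\to^B$ precisely on the relevant elements even though they may disagree elsewhere. Granting the claim, $w(\f)=v(\f)<1_B=1_A$, so $A\not\models\f$, completing the proof.
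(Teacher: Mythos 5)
Your proposal is correct and follows essentially the same route as the paper: generate the $(\cdot,\vee,1)$-subalgebra of $B$ from $\Sub_v(\f)$, invoke \cref{lem:M(n)} for finiteness, define $\wedge$ and $\to$ internally as joins inside the finite subalgebra, and observe that these agree with the operations of $B$ precisely on pairs whose meet or residual already lies in $\Sub_v(\f)$, so that $v(\f)$ keeps its value and remains below $1$. The paper compresses your induction on subformulas into a single sentence, but the underlying argument is identical.
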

\begin{proof}
Let $A$ be the $(\cdot,\vee, 1)$-subalgebra of $B$ generated by $\Sub_v(\f)$.
By \cref{lem:M(n)} the i-semiring $A$ is finite and the following operations are well defined, since the joins are all finite:
 $a,b\in A$
\[a\to_A b:=\bigvee\{c\in A\mid a\cdot c\leq b\} \text{ and } a\wedge_A b:=\bigvee\{c\in A \mid c\leq a\text{ and }c\leq b\}\ .\]
It is well known and easy to verify that under these operations $A$ is actually a residuated lattice. 
 Furthermore, as $a\to b=\bigvee\{d\in B\mid a\wedge d\le b\}$, it is easy to see that $a\to_A b\le a\to b$ and that $a\to_A b=a\to b$ whenever $a\to b\in \Sub_v(\f)$.  The same holds for $\wedge_{A}$.  This entails that $v(\f)$ attains the same value in $A$ and $B$. As $v(\f)\ne 1_B$, we conclude that $v(\f)\ne 1_A$. Thus, $A$ is in $\kRL$, it is a $(\cdot,\vee, 1)$-subalgebra of $B$ and $A$ refutes $\f$. 
\end{proof}

The above lemma motivates the following definition.
\begin{definition}[$(\Dw ,\Dt )$-embedding]\label{d:D-embedding}
Let $A,B\in\kRL$, and $\Dw ,\Dt$ be two subsets of $A^{2}$. We call \emph{$(\Dw ,\Dt )$-embedding} a map $h\colon A\to B$ which is injective, preserves $\cdot$ and $\vee$, and such that if $(a,b)\in \Dt $ then $h(a\to b) = h(a)\to h(b)$ and if $(a,b)\in \Dw $ then $h(a\wedge b) = h(a)\wedge h(b)$. For such maps we use the notation $h\colon A\dmap B$, where $D=(\Dw ,\Dt )$.
\end{definition}

We have now all the ingredients to introduce $(\cdot,\vee, 1)$-canonical formulas. However, to motivate them, we first state the main theorem of this section and then proceed with the formal definition.
\begin{theorem}\label{thm:main}
For any formula $\f$ such that $\FLk \not\vdash\f$, there exists a finite set of $(\cdot,\vee, 1)$-canonical formulas $\{\gamma(A_{i},D_{i}^{\wedge},D_{i}^{\to})\mid 0\leq i \leq m\}$ such that for any $B\in\kRL$
\begin{align}
\label{eq:star}
B\models \f \qquad \text{ if, and only if, }  \qquad \forall i\leq m\quad B\models \gamma(A_{i},D_{i}^{\wedge},D_{i}^{\to})\ .
\end{align}
\end{theorem}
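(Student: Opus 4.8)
The plan is to obtain the required canonical formulas by reading off all \emph{sufficiently small} finite refutations of $\f$. Put $N=M(|\Sub(\f)|)$, the bound from \cref{lem:M(n)}, and let $\Gamma$ consist of all triples $(A,D^{\wedge},D^{\to})$, up to isomorphism, such that $A$ is a finite subdirectly irreducible algebra of $\kRL$ with $|A|\le N$, $D^{\wedge},D^{\to}\subseteq A^{2}$, and there is a valuation $v$ into $A$ with $v(\f)\ne 1_{A}$ for which $(v(\alpha),v(\beta))\in D^{\wedge}$ whenever $\alpha\wedge\beta\in\Sub(\f)$ and $(v(\alpha),v(\beta))\in D^{\to}$ whenever $\alpha\to\beta\in\Sub(\f)$. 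Since the signature is finite there are only finitely many algebras of size $\le N$ up to isomorphism, and finitely many pairs of subsets of $A^{2}$ for each, so $\Gamma$ is finite; enumerate it as $(A_{0},D^{\wedge}_{0},D^{\to}_{0}),\dots,(A_{m},D^{\wedge}_{m},D^{\to}_{m})$ and take $\{\gamma(A_{i},D^{\wedge}_{i},D^{\to}_{i})\mid 0\le i\le m\}$ as the desired set. It is nonempty: $\FLk\not\vdash\f$ gives $\kRL\not\models\f$, hence some subdirectly irreducible algebra refutes $\f$, and the construction of the third paragraph attaches to it a member of $\Gamma$. Both directions of \eqref{eq:star} will be argued contrapositively, using the semantic reading of the canonical formula supplied by \cref{l:A-refutes-gamma}: $\gamma(A,D^{\wedge},D^{\to})$ is refuted by a $\kRL$-algebra $C$ exactly when $A$ admits a $(D^{\wedge},D^{\to})$-embedding (\cref{d:D-embedding}) into $C$, or into a well-connected homomorphic image of $C$; recall also that $\gamma(A,D^{\wedge},D^{\to})$ is a genuine $\FLk$-formula, so its $\kRL$-models form a subvariety.

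The implication from right to left is the easy half. If $B\not\models\gamma(A_{i},D^{\wedge}_{i},D^{\to}_{i})$, then by the above there is a $(D^{\wedge}_{i},D^{\to}_{i})$-embedding $h\colon A_{i}\dmap\bar B$ for some homomorphic image $\bar B$ of $B$ (possibly $\bar B=B$). Let $v_{i}$ be a valuation witnessing $(A_{i},D^{\wedge}_{i},D^{\to}_{i})\in\Gamma$. By induction on a subformula $\psi$ of $\f$ one checks that $h(v_{i}(\psi))$ is the value of $\psi$ under the valuation $h\circ v_{i}$ in $\bar B$: the operations $\cdot,\vee,1$ are preserved by $h$ by definition of a $(D^{\wedge}_{i},D^{\to}_{i})$-embedding, and at the steps $\psi=\alpha\wedge\beta$ and $\psi=\alpha\to\beta$ the pair $(v_{i}(\alpha),v_{i}(\beta))$ lies in $D^{\wedge}_{i}$, resp.\ $D^{\to}_{i}$, so $h$ commutes with the outermost connective there as well. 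Hence $(h\circ v_{i})(\f)=h(v_{i}(\f))\ne 1_{\bar B}$, since $h$ is injective with $h(1_{A_{i}})=1_{\bar B}$ while $v_{i}(\f)\ne 1_{A_{i}}$; thus $\bar B\not\models\f$, and since satisfaction of the equation $\f\approx 1$ passes to homomorphic images, $B\not\models\f$.

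For the converse, assume $B\not\models\f$, witnessed by $v$ with $v(\f)\ne 1_{B}$. Writing $B$ as a subdirect product of subdirectly irreducibles and choosing a factor $B'$ on which $v(\f)$ has a coordinate $\ne 1$, I obtain a subdirectly irreducible homomorphic image $B'$ of $B$ carrying a valuation $v'$ with $v'(\f)\ne 1_{B'}$, and $B'$ is well-connected by \cref{t:s-i-opremum}. Applying \cref{lem:4.4} to $B'$ and $v'$ yields a finite $A\in\kRL$ that is the $(\cdot,\vee,1)$-subalgebra of $B'$ generated by $\Sub_{v'}(\f)$; by the proof of that lemma $v'$ corestricts to a valuation into $A$ under which every subformula of $\f$ keeps its $B'$-value, so $A\not\models\f$, while $|A|\le N$ by \cref{lem:M(n)} and $A$ is subdirectly irreducible by \cref{l:well-connect-subdirectly}, being a finite nontrivial $(\cdot,\vee,1)$-subalgebra of the well-connected $B'$. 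Set $D^{\wedge}=\{(a,b)\in A^{2}: a\wedge_{A}b=a\wedge_{B'}b\}$ and $D^{\to}=\{(a,b)\in A^{2}: a\to_{A}b=a\to_{B'}b\}$; then the inclusion $A\hookrightarrow B'$ is a $(D^{\wedge},D^{\to})$-embedding, and the clause of \cref{lem:4.4} asserting that $\wedge$ and $\to$ are computed correctly on values in $\Sub_{v'}(\f)$ makes $v'$ a witness that $(A,D^{\wedge},D^{\to})\in\Gamma$. Hence this triple is (isomorphic to) some $(A_{i},D^{\wedge}_{i},D^{\to}_{i})$, and by \cref{l:A-refutes-gamma} the resulting $(D^{\wedge}_{i},D^{\to}_{i})$-embedding $A_{i}\dmap B'$ gives $B'\not\models\gamma(A_{i},D^{\wedge}_{i},D^{\to}_{i})$; since $B'$ is a homomorphic image of $B$ and the $\kRL$-models of $\gamma(A_{i},D^{\wedge}_{i},D^{\to}_{i})$ form a subvariety, $B\not\models\gamma(A_{i},D^{\wedge}_{i},D^{\to}_{i})$.

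I expect the real difficulty to lie upstream, in \cref{d:canform} and \cref{l:A-refutes-gamma}: turning the relational statement ``$A$ has a $(D^{\wedge},D^{\to})$-embedding into a well-connected quotient of $C$'' into an honest $\FLk$-formula and proving it has exactly that refutation semantics. Within the argument above the only delicate points are bookkeeping: the domains $D^{\wedge},D^{\to}$ must be chosen large enough to protect every meet and implication occurring among the subformulas of $\f$, so that a $(D^{\wedge},D^{\to})$-embedding transports the whole evaluation of $\f$ — which is precisely why local finiteness of the $(\cdot,\vee,1)$-reduct (\cref{lem:M(n)}, following \cite{blok2002finite}) is indispensable, as it keeps $\Gamma$ finite — and, in the converse direction, the passage to a subdirectly irreducible quotient, which is sound only because satisfaction of each $\gamma(A_{i},D^{\wedge}_{i},D^{\to}_{i})$ is inherited by homomorphic images.
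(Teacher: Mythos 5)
Your overall architecture coincides with the paper's: you build the finite system of triples from all sufficiently small subdirectly irreducible refutations of $\f$ (this is the paper's \cref{d:algebras-A}, except that you allow $D^{\wedge},D^{\to}$ to be arbitrary supersets of the protected pairs, which is harmless), and your left-to-right direction --- pass to a subdirectly irreducible quotient $B'$ refuting $\f$, generate the finite $(\cdot,\vee,1)$-subalgebra via \cref{lem:4.4}, recognise it as a member of the system, apply the forward half of \cref{l:A-refutes-gamma}, and push the refutation back from $B'$ to $B$ --- is exactly the paper's \cref{thm:4.3} combined with the easy half of its \cref{thm:4}, and is complete.

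The gap is in the right-to-left direction. You assert, citing \cref{l:A-refutes-gamma}, that $B\not\models\gamma(A_i,D^{\wedge}_i,D^{\to}_i)$ yields a $(D^{\wedge}_i,D^{\to}_i)$-embedding of $A_i$ into some homomorphic image of $B$. But item (2) of \cref{l:A-refutes-gamma} gives this only when the refuting algebra is itself subdirectly irreducible \emph{and} the refutation is of the special form $\not\models_1$, i.e.\ comes from a valuation $\mu$ with $\mu(\Gamma^{k})=1$ and $\mu(\Delta)$ below the second greatest element (here $\Gamma,\Delta$ are the components of the canonical formula from \cref{d:canform}, not your set of triples). An arbitrary refuting valuation $v$ on an arbitrary $B\in\kRL$ only gives $v(\Gamma^{k})\not\leq v(\Delta)$, and one must manufacture the required subdirectly irreducible quotient. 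The paper does this in its \cref{thm:4} by taking the deductive filter $F$ generated by $v(\Gamma^{k})$: by $k$-potency $F=\{b\mid b\geq v(\Gamma)^{k}\}$, hence $v(\Delta)\notin F$, and quotienting by $F$ and then passing to a suitable subdirectly irreducible image keeps the value of $\Delta$ strictly below $1$ while sending $\Gamma^{k}$ to $1$. This is the one place where $k$-potency is used beyond guaranteeing local finiteness of the $(\cdot,\vee,1)$-reduct --- without it the filter generated by $v(\Gamma^{k})$ could contain $v(\Delta)$ via a higher power of $v(\Gamma^{k})$ --- and your proof never supplies this step.
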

We will see later in \cref{d:algebras-A}, how to associate such formulas with an arbitrary refutable formula $\f$.

\begin{definition}[$(\cdot,\vee, 1)$-canonical formulas]\label{d:canform}
Let $A$ be a finite algebra in $\kRLsi$ and let $\Dw ,\Dt$ be subsets of $A^2$. For each $a\in A$, we introduce a new variable $X_a$, and set
\begin{align*}
\Gamma  := \quad &(X_{\bot} \leftrightarrow \bot) \wedge (X_{1} \leftrightarrow 1) \wedge\\
& \bigwedge \ \{X_{a\cdot b} \leftrightarrow X_a \cdot X_b\mid  a,b\in A\} \ \wedge\\
& \bigwedge \ \{X_{a\vee b} \leftrightarrow X_a \vee X_b\mid  a,b\in A\} \ \wedge\\
& \bigwedge \ \{X_{a\to b} \leftrightarrow X_a\to X_b\mid (a,b)\in \Dt \}\\
& \bigwedge \ \{X_{a\wedge b} \leftrightarrow X_a\wedge X_b\mid (a,b)\in \Dw \}\\
&\text{and}&\\
\Delta := \quad& \bigvee\{X_a\to X_b\mid a,b\in A \mbox{ with } a\not\le b\}.
\end{align*}
Finally, we define the {\em $(\cdot,\vee, 1)$-canonical formula $\gamma(A,\Dw ,\Dt)$} associated with $A$, $\Dw $, and $\Dt $ as 
\[\gamma(A,\Dw ,\Dt ):=\Gamma^{k}\to\Delta.\]
\end{definition}

 In the following we use $C \not \models_1 \Gamma^k \rightarrow \Delta$ to mean that there is a valuation $\mu$ into an algebra $C$ with second greatest element $s_C$ such that $\mu(\Gamma^{k})=1$ and $\mu(\Delta)\leq s_C$. Note that this implies that $C \not \models \Gamma^k \rightarrow \Delta$, since $\mu(\Gamma^{k} \to \Delta)=\mu(\Gamma^{k})\to\mu(\Delta)\le s_C$, thus $\mu$ refutes $\gamma(A,\Dw ,\Dt )$ on $C$.

\begin{lemma}\label{l:A-refutes-gamma}
Let $A$ and $C$ be algebras in $\kRLsi$ with $A$ finite. 
\begin{enumerate}[1.]
\item\label{l:A-refutes-gamma:1} $A  \not \models_{1} \gamma(A,D^{\wedge},D^{\to})$.
\item\label{l:A-refutes-gamma:2}  $A\dmap C$ iff $C \not \models_1 \gamma(A,D^{\wedge},D^{\to})$, for some subsets $D^{\wedge},D^{\to}$ of $A^2$.
\end{enumerate}
\end{lemma}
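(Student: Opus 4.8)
The plan is to prove part~\ref{l:A-refutes-gamma:1} and the left-to-right direction of part~\ref{l:A-refutes-gamma:2} by evaluating the formula under the ``canonical'' valuation $X_a\mapsto a$ (respectively $X_a\mapsto h(a)$, when a $(D^{\wedge},D^{\to})$-embedding $h$ is given), and to prove the right-to-left direction of part~\ref{l:A-refutes-gamma:2} by reading an embedding off an arbitrary refuting valuation. Throughout I will use two elementary facts about any integral residuated lattice: $s\leftrightarrow t=1$ exactly when $s=t$, and a meet of elements all $\le 1$ equals $1$ exactly when each of them does; combined with $x^k\le x\le 1$, these give that $\mu(\Gamma^k)=1$ forces $\mu(\Gamma)=1$, and hence that $\mu$ sends every conjunct of $\Gamma$ to $1$. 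I will also use that a $\vee$-preserving map is order preserving and, if injective, order reflecting.

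For part~\ref{l:A-refutes-gamma:1} I would take the valuation $\mu$ on $A$ with $\mu(X_a):=a$; for the left-to-right direction of part~\ref{l:A-refutes-gamma:2}, given $h\colon A\dmap C$, I would take $\mu(X_a):=h(a)$. In either case, since the map in question preserves $\cdot$, $\vee$, $1$, $\bot$ and the designated pairs in $D^{\wedge}$ and $D^{\to}$, the valuation $\mu$ sends the two sides of every conjunct of $\Gamma$ to the same element, and hence sends each conjunct to $1$; thus $\mu(\Gamma)=1$ and $\mu(\Gamma^k)=1$. Moreover $\mu(\Delta)$ is the finite join of the elements $h(a)\to h(b)$ over the pairs $(a,b)$ with $a\not\le b$ (with $h$ the identity in part~\ref{l:A-refutes-gamma:1}); since $h$ reflects the order, each such $h(a)\to h(b)$ is different from $1$, hence lies below the second-greatest element, and therefore so does $\mu(\Delta)$. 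Thus $\mu$ witnesses $A\not\models_1\gamma(A,D^{\wedge},D^{\to})$ in part~\ref{l:A-refutes-gamma:1}, and $C\not\models_1\gamma(A,D^{\wedge},D^{\to})$ in the left-to-right direction of part~\ref{l:A-refutes-gamma:2}.

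For the right-to-left direction of part~\ref{l:A-refutes-gamma:2}, I would start from a valuation $\mu$ witnessing $C\not\models_1\gamma(A,D^{\wedge},D^{\to})$ and set $h(a):=\mu(X_a)$. From $\mu(\Gamma^k)=1_C$ the remarks above yield that every conjunct of $\Gamma$ evaluates to $1_C$; reading off these equalities shows that $h$ preserves $\cdot$, $\vee$, $1$, $\bot$, that $h(a\to b)=h(a)\to h(b)$ whenever $(a,b)\in D^{\to}$, and that $h(a\wedge b)=h(a)\wedge h(b)$ whenever $(a,b)\in D^{\wedge}$. For injectivity, I would argue by contradiction: if $h(a)=h(b)$ with $a\ne b$, then $a\not\le b$ or $b\not\le a$; in the first case $X_a\to X_b$ is one of the disjuncts of $\Delta$, so $s_C\ge\mu(\Delta)\ge\mu(X_a\to X_b)=h(a)\to h(b)=1_C$ (the last equality because $h(a)=h(b)$), contradicting $s_C<1_C$, and the other case is symmetric. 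Hence $h$ is a $(D^{\wedge},D^{\to})$-embedding, i.e.\ $A\dmap C$.

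Since the whole argument is little more than an unwinding of the definitions, I do not expect a genuine obstacle. The one step that really carries weight is the passage from $\mu(\Gamma^k)=1$ to $\mu(\Gamma)=1$ and then to the conclusion that every conjunct of $\Gamma$ equals $1$: this is where integrality and $k$-potency enter, and it is precisely why the canonical formula is built from $\Gamma^k$ rather than from $\Gamma$. I would also make explicit that a $(D^{\wedge},D^{\to})$-embedding is taken to preserve the constants $1$ and $\bot$ as well, so that the conjuncts $X_1\leftrightarrow 1$ and $X_\bot\leftrightarrow\bot$ of $\Gamma$ are matched, and that the trivial algebra is tacitly excluded, since a second-greatest element must exist for $\not\models_1$ to be meaningful.
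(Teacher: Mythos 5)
Your proposal is correct and follows essentially the same route as the paper: the same canonical valuations $X_a\mapsto a$ and $X_a\mapsto h(a)$ for part 1 and the forward direction, and reading $h(a):=\mu(X_a)$ off a refuting valuation for the converse, with the same use of integrality to pass from $\mu(\Gamma^k)=1$ to each conjunct of $\Gamma$ equalling $1$ and of $\Delta$ to force order-reflection (hence injectivity). Your explicit remarks that a $(D^\wedge,D^\to)$-embedding must preserve $1$ and $\bot$, and that injective $\vee$-preserving maps reflect the order, are exactly the points the paper uses implicitly.
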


\begin{proof}
We denote by $s_A$ and $s_C$ the second greatest elements of $A$ and $C$ respectively. 

Item \ref{l:A-refutes-gamma:1} is readily seen by considering the valuation
\begin{align}
\nu(X_{a}):=a.\label{eq:valuation-nu}
\end{align}
Note indeed that the valuation $\nu$ obviously sends to $1_A$ each conjunct of $\Gamma$, so also $\nu(\Gamma)=1_A$, whence $\nu(\Gamma^{k})=\nu(\Gamma)^{k}=1_A$.  To see that $\nu(\Delta)=s_A$, note that for any $a\in A$ such that  $a\neq 1_A$ the implication $X_{1} \to X_{a}$ appears in the join in $\Delta$, so $\nu(\Delta)\geq \nu(X_{1}\to X_{a})$, but $\nu(X_{1}\to X_{a})=1_A\to a=a$, so $\nu(\Delta)\geq \bigvee\{a\in A\mid a\neq 1_A\}=s_A$.  Suppose now toward a contradiction that $\nu(\Delta)=1_A$. Since $A$ has a second greatest element, one of the implications $X_{a}\to X_{b}$ must attain value $1_A$ under $\nu$. But this is not possible as $\nu(X_{a}) = a$, $\nu(X_{b})= b $ and $a\not\leq b$. So $\nu(\Delta)=s_A$.

For the forward direction in item \ref{l:A-refutes-gamma:2}, assume that $h:A\dmap C$. 
We define a valuation $\mu$ on $C$ as the unique extension of the assignment $\mu(X_a):=h(\nu(X_a))=h(a)$ for each $a\in A$, and prove that
 $\mu(\Gamma^{k})=1_{C}$ and $\mu(\Delta)\le s_C$. We first observe that each conjunct in $\Gamma$ is sent into $1_{C}$ by $\mu$. We just treat a couple of representative cases.
\begin{align*}
\mu(X_{1}\leftrightarrow 1)& = \mu(X_{1})\leftrightarrow1_{C}&\text{because $\mu$ is a valuation}\\
&=h(1_{A})\leftrightarrow 1_{C}&\text{by the definition of $\mu$}\\
&=1_{C}\leftrightarrow 1_{C}&\text{because $h$ is a $(\Dw ,\Dt )$-embedding}\\
&=1_{C}\ .
\end{align*}
If the formula $X_{a\wedge b} \leftrightarrow X_a\wedge X_b$ appears among the conjuncts in $\Gamma$, then $(a,b)\in \Dw$. So, reasoning exactly as above, we have
\begin{align*}
\mu(X_{a\wedge b} \leftrightarrow X_a\wedge X_b)&= \mu(X_{a\wedge b}) \leftrightarrow \mu(X_a)\wedge \mu(X_b)\\
&=h({a\wedge b}) \leftrightarrow h(a)\wedge h(b)\\
&=h({a\wedge b}) \leftrightarrow h(a \wedge b)\\
&=1_{C}\ .
\end{align*} 
Now let $a,b\in A$ with $a\not\le b$. Since $h$ is injective, we have $h(a)\not\le h(b)$. Therefore, $\mu(X_a\to X_b)=\mu(X_a)\to\mu(X_b)=h(a)\to h(b)\ne 1_C$. So $h(a)\to h(b)\le s_C$, and hence $\mu(\Delta)\le s_C$. 

For the converse direction, suppose that there exists some valuation $v$ into $C$ such that $v(\Gamma^{k})=1$ and $v(\Delta)\leq s_{C}$. We define $h\colon A \to C$ by $h(a) := v(X_a)$ for each $a\in A$ and show that $h$ is a $(\Dw ,\Dt )$-embedding.
Let $a,b\in A$. Since $v(\Gamma^{k})=1_C$ and $v(\Gamma^{k})\leq v(X_{a\cdot b}) \leftrightarrow (v(X_a) \cdot v(X_b))$, we obtain that $v(X_{a\cdot b}) \leftrightarrow (v(X_a) \cdot v(X_b))=1_C$. Therefore, 
\[h(a\cdot b)=v(X_{a\cdot b}) = v(X_a) \cdot v(X_b)= h(a)\cdot h(b).\] 
By a similar argument, $h({a\vee b}) = h(a) \vee h(b)$, $h(\bot) = v(\bot)$, $h(1_A) = v(1)$, and for $(a,b)\in D_{i}^{\to}$, then $h({a\to b}) = h(a) \to h(b)$ and for $(a,b) \in D_{i}^{\wedge}$ we have $h({a\wedge b}) = h(a) \wedge h(b)$.  To see that $h$ is injective it suffices to show that $a\not\leq b$ in $A$ implies $h(a)\not\leq h(b)$ in $C$. So, suppose $a\not\leq b$. By \eqref{eq:val1}, $v(\Delta)\neq 1_C$,  therefore $v(X_a)\to v(X_b)<1_C$. So $h(a)\to h(b)<1_C$, which implies $h(a)\not\le h(b)$.
\end{proof}

We now explain how to obtain the algebras $A_{i}$'s in the above claim from a formula $\f$.  

\begin{definition}[The system $\{(A_{i},D_{i}^{\wedge},D_{i}^{\to})\mid 1\leq i\leq m\}$ associated with $\f$]\label{d:algebras-A}
Given any formula $\f$ that is not a consequence of {\FLk} we proceed as follows.  
Let $ p = |\Sub(\f)|$. 
Let $(A_1, v_1),\dots,(A_m, v_m)$  be all the pairs such that each $A_i$ is an algebra in $\kRLsi$ whose cardinality, with the notation of \cref{lem:M(n)}, is less or equal than $M(p)$ and $v_i$ is a valuation into $A_i$ such that $A_{i}, v_i\not\models \f$.  We set 
 \begin{align}
\label{eq:def-D1} 
D_{i}^{\wedge}&:=\{(a,b)\in \big(\Sub_{v_{i}}(\f)\big)^2\mid a\wedge b\in \Sub_{v_{i}}(\f)\},\\ 
\label{eq:def-D2} 
D_{i}^{\to}&:=\{(a,b)\in \big(\Sub_{v_{i}}(\f)\big)^2\mid a\to b\in \Sub_{v_{i}}(\f)\}\ .
 \end{align}
We call $\{(A_{i},D_{i}^{\wedge},D_{i}^{\to})\mid 1\leq i\leq m\}$ the \emph{system associated with $\f$}. 
\end{definition}

To prove \eqref{eq:star} we shall go through a further equivalent condition, so in the rest of this section we prove the following equivalences for $B \in \kRL$:
\begin{align}
\label{eq:star2}B\not\models \f \; \Longleftrightarrow\;  \exists i\leq m\,\exists_{SI} C\; A_{i}\dmap  C\twoheadleftarrow B\;\Longleftrightarrow\; \exists i\leq m\; B\not\models \gamma(A_{i},D_{i}^{\wedge},D_{i}^{\to})\ .
\end{align}

\subsection{Proof of \cref{thm:main}}\label{ss:proof-dot-vee-can-form}

\begin{proposition}[First equivalence in \eqref{eq:star2}]\label{thm:4.3}
Suppose $\FLk \not\vdash\f$ and let the system $(A_1,\Dw_1,\Dt_1),$\dots, $(A_m,\Dw_m,\Dt_m)$ be the one associated with $\f$ as in \cref{d:algebras-A}.  For each $B\in\kRL$, the following are equivalent:
\begin{enumerate}[\textup{(}i\textup{)}]
\item\label{thm:4.3:item1} $B\not\models \f$,
\item\label{thm:4.3:item2} $\exists i\leq m\,\exists_{SI} C\; A_{i}\dmap  C\twoheadleftarrow B$.  In other words, there exists $C$, subdirectly irreducible homomorphic image of $B$, and a $(D_{i}^{\wedge},D_{i}^{\to})$-embedding $h\colon A_i\dmap  C$.
\end{enumerate}
\end{proposition}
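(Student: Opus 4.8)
The plan is to prove the two implications of \cref{thm:4.3} separately. The left-to-right direction rests on Birkhoff's subdirect representation theorem combined with \cref{lem:4.4} and \cref{l:well-connect-subdirectly}, whereas the right-to-left direction is a direct transfer of a refuting valuation along the maps $A_i\dmap C\twoheadleftarrow B$.

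\emph{Left to right.} Suppose $B\not\models\f$, witnessed by a valuation $v$ into $B$ with $v(\f)\neq 1_B$. Writing $B$ as a subdirect product of subdirectly irreducible algebras — which lie in $\kRL$, as $\kRL$ is a variety — and using $v(\f)\neq 1_B$, we obtain a surjection $g\colon B\twoheadrightarrow C$ onto one of these s.i.\ factors with $g(v(\f))\neq 1_C$; thus $C,g\circ v\not\models\f$. Apply \cref{lem:4.4} to $\f$, $C$ and $g\circ v$: it produces a finite $A\in\kRL$ that is a $(\cdot,\vee,1)$-subalgebra of $C$ and refutes $\f$ (via $g\circ v$ viewed as a valuation into $A$). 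Since $A$ is generated, as a $(\cdot,\vee,1)$-algebra, by the at most $|\Sub(\f)|$ values of subformulas of $\f$, \cref{lem:M(n)} gives $|A|\le M(|\Sub(\f)|)$. Moreover $C$ is well-connected: by \cref{t:s-i-opremum} it has a second-greatest element $s_C$, so every non-unit of $C$ is $\le s_C<1_C$ and hence so is the join of any two non-units. As the inclusion $A\hookrightarrow C$ is injective and preserves $1$ and binary join, \cref{l:well-connect-subdirectly} applies, and since $A$ is non-trivial (the trivial algebra refutes nothing) it is subdirectly irreducible. Therefore $(A,g\circ v)$ is, up to isomorphism, one of the pairs $(A_i,v_i)$ in the system associated with $\f$ in \cref{d:algebras-A}. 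It remains to verify that the inclusion $A\hookrightarrow C$ is a $(D_i^{\wedge},D_i^{\to})$-embedding: injectivity and preservation of $\cdot,\vee,1$ (and of $\bot$) are clear, while the conditions for $D_i^{\to}$ and $D_i^{\wedge}$ hold because the derived operations $\to_A$ and $\wedge_A$ from the proof of \cref{lem:4.4} coincide with $\to_C$ and $\wedge_C$ precisely on those pairs of subformula values whose residual, respectively meet, is again a subformula value — and these are exactly the pairs recorded in $D_i^{\to}$, respectively $D_i^{\wedge}$. Transporting along the isomorphism $A_i\cong A$ yields the desired embedding $h\colon A_i\dmap C$, so $A_i\dmap C\twoheadleftarrow B$ with $i\le m$.

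\emph{Right to left.} Let $g\colon B\twoheadrightarrow C$ with $C$ s.i.\ and let $h\colon A_i\dmap C$ be a $(D_i^{\wedge},D_i^{\to})$-embedding. Define a valuation $w$ into $C$ by $w(X_j):=h(v_i(X_j))$ on the variables of $\f$. An induction on the structure of a subformula $\psi$ of $\f$ shows $w(\psi)=h(v_i(\psi))$: the cases of $\cdot$, $\vee$ and the constants use only that $h$ is a $(\cdot,\vee,1)$-homomorphism, while if $\psi=\psi_1\to\psi_2$ (resp.\ $\psi=\psi_1\wedge\psi_2$) then $(v_i(\psi_1),v_i(\psi_2))\in D_i^{\to}$ (resp.\ $\in D_i^{\wedge}$), since $v_i(\psi)$ is a subformula value equal to the residual (resp.\ meet) of that pair, so $h$ preserves the relevant operation on it. Hence $w(\f)=h(v_i(\f))$; as $A_i,v_i\not\models\f$ gives $v_i(\f)\neq 1_{A_i}$ and $h$ is injective, $w(\f)\neq h(1_{A_i})=1_C$, i.e.\ $C,w\not\models\f$. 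Finally, since $g$ is surjective, choose for each variable $X_j$ an element $v'(X_j)\in g^{-1}(w(X_j))$; the induced valuation $v'$ into $B$ satisfies $g\circ v'=w$, so $g(v'(\f))=w(\f)\neq 1_C=g(1_B)$, which forces $v'(\f)\neq 1_B$, that is $B\not\models\f$.

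The step I expect to be the main obstacle is the final one of the left-to-right direction: recognising the finite algebra produced by \cref{lem:4.4} as a member of the system and, crucially, certifying that its inclusion into $C$ is genuinely a $(D_i^{\wedge},D_i^{\to})$-embedding. This is precisely where the exact statement of \cref{lem:4.4} and the definitions of $D_i^{\wedge},D_i^{\to}$ in \cref{d:algebras-A} must be made to fit together, and where the bookkeeping for the constants needs care; granting this, the subdirect-representation step and the inductive valuation transfer in the converse direction are routine.
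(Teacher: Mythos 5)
Your proposal is correct and follows essentially the same route as the paper's proof: passing to a subdirectly irreducible image $C$, invoking \cref{lem:4.4}, \cref{lem:M(n)} and \cref{l:well-connect-subdirectly} to recognise the generated $(\cdot,\vee,1)$-subalgebra as one of the $(A_i,v_i)$ with the inclusion a $(D_i^{\wedge},D_i^{\to})$-embedding, and, conversely, transferring the refuting valuation along $h$ and lifting it through the quotient. The only difference is that you spell out the induction on subformulas and the preimage choice that the paper leaves implicit.
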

\begin{proof}
We prove first that \eqref{thm:4.3:item2} implies \eqref{thm:4.3:item1}. Suppose that $h$  is a $(D_{i}^{\wedge},D_{i}^{\to})$-embedding of $A_{i}$ into $C$, where $C$ is a homomorphic image of $B$.  Recalling the definitions in \eqref{eq:def-D1} and \eqref{eq:def-D2}, if $a\to b\in \Sub_{v_i}(\f)$, then $(a,b)\in D_{i}^{\to}$ and if $a\wedge b\in \Sub_{v_i}(\f)$, then $(a,b)\in D_{i}^{\wedge}$.  This entails that $h$ preserves  globally $\cdot$ and $\vee$, and in addition if $a\to b\in \Sub_{v_i}(\f)$, then $h(a\to b)=h(a)\to h(b)$, and if $a\wedge b\in \Sub_{v_i}(\f)$, then $h(a\wedge b)=h(a)\wedge h(b)$. But $v_i(\f)\neq 1$ in $A_i$, so $(h \circ v_i)(\f)\neq 1$ in $C$. Finally, $\f$ fails also in $B$, as $C$ is a homomorphic image of $B$.

For the implication \eqref{thm:4.3:item1} $\Rightarrow$ \eqref{thm:4.3:item2}, suppose $B\not\models\f$. Then, there exists a subdirectly irreducible image $C$ of $B$ with $C\not\models\f$, namely there is a valuation $v$ into $C$ such that $v(\f)\neq 1_C$. 
Let  $S_{C}$ 
be the $(\cdot,\vee, 1)$-subalgebra of $C$ generated by $\Sub_{v}(\f)$. As shown in \cref{lem:4.4}, the set $S_{C}$ 
can be endowed with the structure of a residuated lattice, which is actually in $\kRL$. Furthermore, as $S_{C}$ is a finite $(\cdot,\vee, 1)$-subalgebra of $C$ and $C$ is subdirectly irreducible,  by \cref{l:well-connect-subdirectly} $S_{C}$ is also subdirectly irreducible.
 Recall that $|\Sub_{v}(\f)| \leq |\Sub(\f)|=p$, so $S_{C}$ is generated by at most $p$-many elements, hence $|S_{C}|\leq M(p)$. Since clearly $S_{C}\not\models \f$ we obtain, by \cref{d:algebras-A}, that there is $i\leq m$ such that $S_{C}=A_i$, 
$\Dw_i= \{(a,b)\in (\Sub_{v}(\f))^2\mid  a\wedge b\in \Sub_{v}(\f)\}$ and $\Dt_i= \{(a,b)\in (\Sub_{v}(\f))^2\mid  a\to b\in \Sub_{v}(\f)\}$. Let $h\colon S_{C}\to C$ be the inclusion map. Then by \cref{lem:4.4}, $h\colon S_{C}\dmap C$ . Thus, there is $i\leq m$ and  $h\colon A_i\dmap C$. 
\end{proof}

Having with this concluded the proof of the first equivalence in \eqref{eq:star2}, we now proceed with the second equivalence.

\begin{proposition}[Second equivalence in \eqref{eq:star2}]\label{thm:4}
For any $A\in \kRLsifin$ let $\Dw ,\Dt \subseteq A^2$. For any $B\in\kRL$, the following are equivalent. 
\begin{enumerate}[\textup{(}i\textup{)}]
\item\label{thm:4:item1} $B\not\models\gamma(A,\Dw ,\Dt )$,
\item\label{thm:4:item2} $\exists i\leq m\,\exists_{SI} C\; A_{i}\dmap  C\twoheadleftarrow B$.  Namely, there is  a $(D_{i}^{\wedge},D_{i}^{\to})$-embedding $h\colon A_i\dmap  C$, where $C$ is a subdirectly irreducible homomorphic image of $B$.
\end{enumerate}  
\end{proposition}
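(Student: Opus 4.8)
The plan is to obtain both directions from \cref{l:A-refutes-gamma} — which, for $C\in\kRLsi$ and $A$ finite, equates the existence of a $(\Dw,\Dt)$-embedding $A\dmap C$ with the refutation $C\not\models_{1}\gamma(A,\Dw,\Dt)$ — together with \cref{t:s-i-opremum}, by which every non-trivial subdirectly irreducible algebra in $\kRL$ has a second-greatest element. I read the statement as the equivalence, for a single triple $(A,\Dw,\Dt)$ with $A\in\kRLsifin$, between $B\not\models\gamma(A,\Dw,\Dt)$ and the existence of a subdirectly irreducible homomorphic image $C$ of $B$ admitting a $(\Dw,\Dt)$-embedding $A\dmap C$; applying this to each member $(A_{i},D_{i}^{\wedge},D_{i}^{\to})$ of the system of \cref{d:algebras-A} then yields the second equivalence of \eqref{eq:star2}.

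The implication from the embedding side to the refutation is the routine one. If $q\colon B\twoheadrightarrow C$ is a subdirectly irreducible homomorphic image and $h\colon A\dmap C$ is a $(\Dw,\Dt)$-embedding, then \cref{l:A-refutes-gamma} gives $C\not\models_{1}\gamma(A,\Dw,\Dt)$, hence $C\not\models\gamma(A,\Dw,\Dt)$ by the observation made just before that lemma; pulling a refuting valuation on $C$ back along the surjection $q$ (picking, for each variable, a $q$-preimage of its value) produces a refuting valuation on $B$, so $B\not\models\gamma(A,\Dw,\Dt)$.

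For the converse, fix a valuation $v$ on $B$ with $v(\gamma(A,\Dw,\Dt))\neq 1_{B}$; writing $\gamma(A,\Dw,\Dt)=\Gamma^{k}\to\Delta$, this says $v(\Gamma^{k})\not\leq v(\Delta)$. Here the exponent $k$ does the crucial work: $g:=v(\Gamma^{k})=v(\Gamma)^{k}$ is idempotent, since $a^{m}=a^{k}$ for all $m\geq k$ in a $k$-potent algebra, so the deductive filter of $B$ generated by $g$ is just its principal up-set, which omits $v(\Delta)$. Now apply Zorn's lemma to the non-empty family of deductive filters $G$ of $B$ with $g\in G$ and $v(\Delta)\notin G$ — a family closed under unions of chains — to obtain a maximal such $F$, and set $C:=B/F$, a member of the variety $\kRL$ that is non-trivial because $v(\Delta)\notin F$. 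By maximality of $F$, every deductive filter of $B$ properly above $F$ contains $v(\Delta)$; translating through the order-isomorphism between congruences and deductive filters of $\kRL$-algebras, the filter of $C$ generated by the image of $v(\Delta)$ is then the least non-trivial one, so $C$ is subdirectly irreducible. By \cref{t:s-i-opremum} it has a second-greatest element $s_{C}$; the image of $v(\Delta)$ is $\neq 1_{C}$ and hence $\leq s_{C}$, while the image of $g=v(\Gamma^{k})$ is $1_{C}$. Thus the valuation induced by $v$ on $C$ witnesses $C\not\models_{1}\gamma(A,\Dw,\Dt)$, so \cref{l:A-refutes-gamma} yields a $(\Dw,\Dt)$-embedding $A\dmap C$, and $C$ is a subdirectly irreducible homomorphic image of $B$, as required.

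I expect the converse to be the only real difficulty: the point is to promote the bare order-failure $v(\Gamma^{k})\not\leq v(\Delta)$ in $B$ to a subdirectly irreducible quotient that still separates these two values. This rests on two facts that are not purely formal — the idempotency of $v(\Gamma^{k})$, which is exactly why $\Gamma$ appears to the $k$-th power and which guarantees that collapsing it to $1$ is a quotient by a principal filter not already containing $v(\Delta)$; and \cref{t:s-i-opremum}, used both to certify the Zorn-maximal quotient as subdirectly irreducible in the required sense and to extract the bound $\leq s_{C}$. Everything else is bookkeeping with \cref{l:A-refutes-gamma} and the lifting of valuations along surjections. Combined with \cref{thm:4.3}, this proposition completes the chain \eqref{eq:star2}, hence \eqref{eq:star}, and so \cref{thm:main}.
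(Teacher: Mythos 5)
Your proposal is correct and follows essentially the same route as the paper: the forward direction is the same application of \cref{l:A-refutes-gamma} plus preservation of refutation under homomorphic preimages, and the converse likewise hinges on the idempotency of $v(\Gamma)^{k}$ to see that the deductive filter generated by $v(\Gamma^{k})$ is a principal up-set omitting $v(\Delta)$, followed by passage to a subdirectly irreducible quotient separating $v(\Delta)$ from $1$ and a final appeal to \cref{l:A-refutes-gamma}. The only (immaterial) difference is that you produce that quotient by running Zorn's lemma on deductive filters containing $v(\Gamma^{k})$ and omitting $v(\Delta)$, whereas the paper first quotients by the principal filter and then invokes the subdirect representation of the resulting algebra to find a subdirectly irreducible image in which the class of $v(\Delta)$ stays below $1$.
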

\begin{proof}
We first prove \eqref{thm:4:item2} $\Rightarrow$ \eqref{thm:4:item1}.  Suppose that there is a subdirectly irreducible  homomorphic image $C$ of $B$ and $h\colon A\dmap C$. By \cref{l:A-refutes-gamma},   $\gamma(A,\Dw ,\Dt )$ fails on $C$.  Finally we conclude that $B\not\models\gamma(A,\Dw ,\Dt )$ for $C$ is a homomorphic image of $B$.

Now we prove \eqref{thm:4:item1} $\Rightarrow$ \eqref{thm:4:item2}.
With the notation of \cref{d:canform}, our hypothesis is equivalent to $B\not\models\Gamma^{k}\to \Delta$. So, there exists a valuation $v$ into $B$ such that
\begin{equation}
\label{eq:val1}v(\Gamma^{k})\not\leq v(\Delta)\ .
\end{equation} Let $F$ be the filter generated by  $v(\Gamma^{k})$ in $B$. By \cite[page 261]{GJKO} $F=\{b\in B\mid b\geq (v(\Gamma^{k}))^{n}\text{ for } n\in \N \}$.  Notice that by $k$-potency $v(\Gamma^k)^n=(v(\Gamma)^k)^n=v(\Gamma)^k$, so we deduce that $v(\Delta)\not\in F$, for if $v(\Delta)\in F$,  
then $v(\Delta)\geq v(\Gamma^{k})$ and this contradicts \eqref{eq:val1}.  Let $B'$ be the quotient of $B$ modulo $F$, and $q\colon B\twoheadrightarrow B'$ the associated canonical epimorphism, then $q\circ v$ is a valuation into $B'$ such that $q\circ v(\Gamma^{k})=1$ and $q\circ v(\Delta)\neq 1$.  Finally, in all subdirectly irreducible  epimorphic images of $B'$ the element $q\circ v(\Gamma^{k})$ is mapped into 1, while there must exist one in which the element $q\circ v(\Delta)$ is not mapped into 1.  Let 
 $C$ be this subdirectly irreducible  algebra and let $\nu$ be the composition of $q\circ v$ with the canonical quotient of $B'$ into $C$.  
By \cref{l:A-refutes-gamma},  there is a $(\Dw ,\Dt )$-embedding $h\colon A\to C$, where  $C$ is a subdirectly irreducible  homomorphic image of $B$. 
\end{proof}

Combining \cref{thm:4.3} with \cref{thm:4} yields.

\begin{corollary} \label{cor:5.8}
Suppose that $\FLk \not\vdash\f$, then there exist $(A_1,\Dw_1,\Dt_1)$, \dots, $(A_m,\Dw_m,\Dt_m)$ such that each $A_i\in\kRLsifin$, $\Dw_i,\Dt_{i}\subseteq A_i^2$, and for each $B\in\kRL$, we have:
\[
B\models \f(X_1,\dots,X_n) \mbox{ iff } B\models \bigwedge_{j=1}^m \gamma(A_j,\Dw_j,\Dt_j).
\]
\end{corollary}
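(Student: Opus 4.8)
The plan is to derive \cref{cor:5.8} by chaining together the two propositions just proved, since together they establish exactly the string of equivalences displayed in \eqref{eq:star2}. First I would fix a formula $\f(X_1,\dots,X_n)$ with $\FLk\not\vdash\f$ and invoke \cref{d:algebras-A} to obtain the associated system $(A_1,\Dw_1,\Dt_1),\dots,(A_m,\Dw_m,\Dt_m)$; by construction each $A_i$ lies in $\kRLsi$ and has cardinality at most $M(p)$ where $p=|\Sub(\f)|$, hence each $A_i\in\kRLsifin$, and each $\Dw_i,\Dt_i\subseteq A_i^2$. This is precisely the data required in the statement, so the only remaining task is the biconditional.

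For the biconditional, let $B\in\kRL$ be arbitrary. \cref{thm:4.3} gives
\[
B\not\models\f \quad\Longleftrightarrow\quad \exists i\leq m\;\exists_{SI}C\; A_i\dmap C\twoheadleftarrow B.
\]
\cref{thm:4} gives, for each individual $i$,
\[
B\not\models\gamma(A_i,\Dw_i,\Dt_i)\quad\Longleftrightarrow\quad \exists_{SI}C\; A_i\dmap C\twoheadleftarrow B
\]
(the statement of \cref{thm:4} is phrased with an $\exists i\leq m$ quantifier, but its proof in fact establishes the equivalence for the fixed $A$ named in its hypothesis, so I would apply it with $A:=A_i$, $\Dw:=\Dw_i$, $\Dt:=\Dt_i$ for each $i$ separately). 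Disjoining the right-hand side of the second equivalence over $i=1,\dots,m$ and matching it against the right-hand side of the first equivalence yields
\[
B\not\models\f \quad\Longleftrightarrow\quad \exists i\leq m\; B\not\models\gamma(A_i,\Dw_i,\Dt_i)\quad\Longleftrightarrow\quad B\not\models\bigwedge_{j=1}^m\gamma(A_j,\Dw_j,\Dt_j).
\]
Contraposing gives $B\models\f$ iff $B\models\bigwedge_{j=1}^m\gamma(A_j,\Dw_j,\Dt_j)$, which is the claim.

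The only genuine subtlety — and the thing I would be careful to flag — is the mild mismatch in how \cref{thm:4} is stated versus how it is used: its conclusion \eqref{thm:4:item2} mentions the whole system and an existential over $i$, whereas to get \cref{cor:5.8} one needs the per-$i$ version "$B\not\models\gamma(A_i,\Dw_i,\Dt_i)$ iff $\exists_{SI}C\; A_i\dmap C\twoheadleftarrow B$." Inspecting the proof of \cref{thm:4} shows that the $\exists i$ plays no role there: the argument takes the single $A$, $\Dw$, $\Dt$ from the hypothesis, builds the filter $F$ generated by $v(\Gamma^k)$, passes to the quotient $B'=B/F$ and then to a subdirectly irreducible image $C$ on which $\gamma(A,\Dw,\Dt)$ is refuted via \cref{l:A-refutes-gamma}. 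So the per-$i$ equivalence is exactly what is proved, and the corollary follows by a purely propositional-logic manipulation (distributing negation over the conjunction $\bigwedge_j\gamma$). There is no real obstacle beyond bookkeeping; the substantive work has already been done in \cref{thm:4.3,thm:4}.
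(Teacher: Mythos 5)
Your proposal is correct and follows essentially the same route as the paper: fix the system from \cref{d:algebras-A}, apply \cref{thm:4.3} to get the intermediate condition $\exists i\leq m\,\exists_{SI}C\; A_i\dmap C\twoheadleftarrow B$, apply \cref{thm:4} (read per-$i$, as its proof indeed establishes) to convert that into $\exists i\leq m\; B\not\models\gamma(A_i,\Dw_i,\Dt_i)$, and contrapose. Your remark about the quantifier mismatch in the statement of \cref{thm:4} is a fair reading of a genuine sloppiness in its phrasing, and your resolution matches how the paper itself uses it.
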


\begin{proof}
Suppose $\FLk \not\vdash\f$. Set $(A_1,\Dw_1,\Dt_1),$\dots, $(A_m,\Dw_m,\Dt_m)$  as in \cref{d:algebras-A}, in particular $A_j\in\kRLsifin$ and $\Dw_{j},\Dt_{j}\subseteq A_j^2$.  By \cref{thm:4.3}, for each $B\in\kRL$, the fact that $B\not\models\f$ is equivalent to the existence of $i\leq m$, a subdirectly irreducible  homomorphic image $C$ of $B$, and a $(D_{i}^{\wedge},D_{i}^{\to})$-embedding $h\colon A_j\rightarrowtail C$. By \cref{thm:4}, the latter condition is in turn equivalent to the existence of $i\leq m$ such that $B\not\models\gamma(A_j,\Dw_j,\Dt_j)$. Thus, $B\models\f(X_1,\dots,X_n)$ if, and only if, $B\models \bigwedge_{i=1}^m \gamma(A_j,\Dw_j,\Dt_j)$.
\end{proof}

\begin{theorem}\label{t:equiv-n-potent}
Each extension $\mathsf{L}$ of $\FLk $ is axiomatisable by $(\cdot,\vee, 1)$-ca\-no\-nical formulas. Furthermore, if $\mathsf{L}$ is finitely axiomatisable, then $\mathsf{L}$ is axiomatisable by finitely many $(\cdot,\vee, 1)$-canonical formulas.
\end{theorem}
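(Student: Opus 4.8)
The plan is to reduce the statement to \cref{cor:5.8} by the standard argument that any logic is the intersection of the logics axiomatised by each of its (non)theorems. First I would recall the correspondence, set up in the preliminaries, between extensions $\mathsf{L}$ of $\FLk$ and subvarieties $\mathbb{V}_{\mathsf{L}}$ of $\kRL$: a formula $\f$ is a theorem of $\mathsf{L}$ if, and only if, $\mathbb{V}_{\mathsf{L}}\models \f\wedge 1 = 1$, i.e.\ every algebra in $\mathbb{V}_{\mathsf{L}}$ validates $\f$ in the sense of \cref{d:free} and the surrounding discussion. Pick an axiomatisation of $\mathsf{L}$ over $\FLk$ by a set $\Sigma$ of formulas (possibly infinite; if $\mathsf{L}$ is finitely axiomatisable we may take $\Sigma$ finite). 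Replacing each axiom $\psi$ by $\psi \wedge 1$ if necessary, we may assume every $\psi\in\Sigma$ is of the form under consideration, so $\mathbb{V}_{\mathsf{L}}$ is the subvariety of $\kRL$ defined by $\{\psi \geq 1 : \psi\in\Sigma\}$.

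Next I would dispose of the axioms $\psi\in\Sigma$ that are already theorems of $\FLk$: these contribute nothing and may be discarded. For each remaining $\psi\in\Sigma$ we have $\FLk\not\vdash\psi$, so \cref{cor:5.8} applies and yields a finite set $\Gamma_\psi := \{\gamma(A^\psi_j, \Dw_{\psi,j}, \Dt_{\psi,j}) : 1\leq j\leq m_\psi\}$ of $(\cdot,\vee,1)$-canonical formulas such that, for every $B\in\kRL$, $B\models\psi$ if, and only if, $B\models\bigwedge_{j=1}^{m_\psi}\gamma(A^\psi_j,\Dw_{\psi,j},\Dt_{\psi,j})$. Set $\Gamma := \bigcup_{\psi\in\Sigma}\Gamma_\psi$, a set of $(\cdot,\vee,1)$-canonical formulas, finite when $\Sigma$ is finite. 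I claim $\mathsf{L}$ is axiomatised over $\FLk$ by $\Gamma$. Indeed, for $B\in\kRL$ we have $B\in\mathbb{V}_{\mathsf{L}}$ iff $B\models\psi$ for all $\psi\in\Sigma$ iff (by the equivalence just quoted) $B\models\gamma$ for all $\gamma\in\Gamma$; hence $\mathbb{V}_{\mathsf{L}}$ is exactly the subvariety of $\kRL$ defined by $\Gamma$, and translating back through the algebraizability of substructural logics, $\mathsf{L}$ is the extension of $\FLk$ by $\Gamma$. The finitely axiomatisable case follows since a finite $\Sigma$ produces a finite union of finite sets $\Gamma_\psi$.

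There is no serious obstacle here: all the content has been packaged into \cref{cor:5.8}, and what remains is the routine bookkeeping of passing from a single refutable formula to an arbitrary axiom set via the algebraizability of $\FLk$ over $\kRL$. The one point worth stating carefully is the normalisation step $\psi \rightsquigarrow \psi\wedge 1$, which ensures that ``$B\models\psi$'' on the algebraic side matches ``$\psi$ is a theorem'' on the logical side exactly as in the translation $\phi\mapsto(1\leq\phi)$; after that, \cref{cor:5.8} is invoked verbatim for each axiom and the canonical formulas are simply collected together.
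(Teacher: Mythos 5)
Your proposal is correct and follows essentially the same route as the paper: discard the axioms already provable in $\FLk$, apply \cref{cor:5.8} to each remaining axiom to get a finite set of canonical formulas, and take the union, with finiteness preserved when the original axiom set is finite. The extra normalisation step $\psi \rightsquigarrow \psi\wedge 1$ is harmless bookkeeping that the paper leaves implicit.
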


\begin{proof}
Let $\mathsf{L}$ be an extension of {\FLk}. Then $\mathsf{L}$ is obtained by adding $\{\f_i\mid i\in I\}$ to {\FLk} as new axioms. We can safely assume to be in the non-trivial case for which $\FLk \not\vdash\f_i$ for each $i\in I$. The extension $\mathsf{L}$ is axiomatised by the canonical formulas of the systems associated with the $\f_{i}$'s. Indeed, \cref{cor:5.8} entails that for each algebra $B$ and for each $i\in I$, there exist $(A_{i1},\Dw_{i1},\Dt_{i1}),\dots,(A_{im},\Dw_{im},\Dt_{im})$ such that $B\models\f_i$ if, and only if,  $B\models\bigwedge_{j=1}^{m_i}\gamma(A_{ij},\Dw_{ij},\Dt_{ij})$. 
Since each formula gets associated with a finite set of $(\cdot,\vee, 1)$-canonical formulas, the last statement in the theorem also holds, namely if $\mathsf{L}$ is finitely axiomatisable, then $\mathsf{L}$ is axiomatisable by finitely many $(\cdot,\vee, 1)$-canonical formulas.
\end{proof}

\begin{remark}
Note that the crucial property  used in the proof of Theorem~\ref{thm:main} is the local finiteness of the variety of the $(\wedge,\to)$-reducts of algebras in $\kRL$. This implies that $\kRL$ enjoys the \emph{finite embeddability property}. 
Therefore, this strong version of the finite embeddability property via locally finite reducts entails an axiomatisation via canonical formulas. We leave it as an open problem whether there is any connection between the finite embeddability property of a given variety of residuated lattices 
in its general form and axiomatisations via canonical formulas of the corresponding logics.  

\end{remark}

\section{Stable $k$-potent logics}\label{sec:stable}

Fix a finite subdirectly irreducible algebra $A$ in $\kRL$.  Given a $(\cdot,\vee, 1)$-canonical formula $\gamma(A,\Dw, \Dt)$, there are two obvious extreme cases to consider: when $\Dw=\Dt =A^2$ 
and when $\Dw=\Dt = \emptyset$. 

If $\Dw=\Dt = A^2$, then the $(\cdot, \vee, 1)$-canonical formula $\gamma(A,\Dw, \Dt)$
is the so-called \emph{splitting formula} of $A$.
 The terminology is justified by the fact that, if $\mathsf{V}(A)$ is the variety generated by $A$ and $\mathbb{V}_A$ is the variety axiomatised by $\gamma(A,A^2, A^2)$, then $(\mathsf{V}(A), \mathbb{V}_A)$ forms a \emph{splitting pair} in the subvariety lattice of $\kRL$, namely that every subvariety of $\kRL$ is either above $\mathsf{V}(A)$ or below $\mathbb{V}_A$. Indeed, if $\mathbb{V}$ is a subvariety of $\kRL$ that it is not included in $\mathbb{V}_A$, then it contains some algebra $B$ that is not in $\mathbb{V}_A$, namely $B\not\models\gamma(A,A^2, A^2)$. By \cref{thm:4}, for any $B\in\kRLsi$, we have that $B\not\models\gamma(A,A^2, A^2)$ if, and only if,  $A$ is isomorphic to a subalgebra of a homomorphic image of $B$. So, $A \in \mathsf{V}(B) \subseteq \mathbb{V}$, hence $\mathbb{V}$ contains $\mathsf{V}(A)$. That every finite subdirectly irreducible algebra in $\kRL$ defines a splitting was already observed in \cite{GJKO}, but here we give explicitly the corresponding identity axiomatising $\mathbb{V}_A$, which is only alluded to in  \cite{GJKO}. 
 
 The existence of splitting formulas for these logics also follows 
 from \cite[Theorem 2.3]{Citkin14}, where it is  proved that if a variety admits a \emph{ternary deductive term} then one can write a splitting formula for every subdirectly irreducible finitely presented algebra $A$ in this variety.

Splitting formulas and logics axiomatised by them (so-called join-splittings) in the setting of intermediate and modal logics 
have been thoroughly investigated 
(see, e.g., \cite{CZ97} or \cite[Sec.~5.3]{BB09} for a short account). 
For an analysis of splitting algebras in $\CIRL$ we refer to \cite[Ch.\ 10]{GJKO} and ~\cite{kowalski2000splittings}, where it is proven that the only splitting algebra is the $2$-element Boolean algebra.

\medskip

Now we consider the case $\Dw=\Dt = \emptyset$ and show that such formulas axiomatise a continuum of extensions of $\FLk$  with the finite model property. In doing this we follow \cite[Sec.\ 4]{Bezhanishvili2014a},  where the same results are proven for intermediate logics. 

Congruences in (commutative, integral) residuated lattices are in bijective correspondence with certain subsets called \emph{deductive} filters. Given a congruence $\theta$, the corresponding deductive $F_\theta$ is $[1]_\theta$, the equivalence class of $1$; given a deductive filter $F$ the corresponding congruence $\theta_F$ is given by $a \mathrel{\theta_{F}} b$ iff $a \to b, b \to a \in F$.
We begin with some observations on finitely generated deductive filters of algebras in $\kRL$. 
\begin{lemma}\mbox{\cite[p.\ 261]{GJKO}} \label{l:filter-explicit}
Let $A$ be a residuated lattice and let $B\seq A$.  The deductive filter generated by $B\seq A$, denoted by $F(B)$, is given by 
\[F(B)=\left\{x\in A\mid b_{1}\cdot\ldots\cdot b_{n}\leq x \text{ for }b_{1},\dots,b_{n}\in B\right\}.\]
\end{lemma}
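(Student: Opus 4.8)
The plan is the standard argument for a generated substructure: show that the set on the right-hand side, call it $G$, is a deductive filter containing $B$, and that it is contained in every deductive filter containing $B$; the conclusion $G = F(B)$ then follows by the definition of the generated filter. I would work with the usual characterisation of deductive filters in a (commutative, integral) residuated lattice (as in \cite[Ch.~3]{GJKO}): a subset $F \seq A$ is a deductive filter if, and only if, $1 \in F$, $F$ is an up-set, and $F$ is closed under $\cdot$; equivalently, $F$ is a submonoid of $\langle A, \cdot, 1\rangle$ that is also an order filter. Writing
\[
G := \{ x \in A \mid b_1 \cdots b_n \leq x \text{ for some } n \in \N \text{ and } b_1, \dots, b_n \in B \},
\]
with the empty product ($n = 0$) read as $1$, the whole argument reduces to checking three closure properties and one minimality statement.

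First I would verify that $G$ is a deductive filter: $1 \in G$ by the $n = 0$ clause (and in any case by integrality); $G$ is upward closed because $b_1 \cdots b_n \leq x \leq y$ forces $b_1 \cdots b_n \leq y$; and $G$ is closed under $\cdot$ because multiplication preserves the order (recalled in \cref{s:prelim}), so from $b_1 \cdots b_n \leq x$ and $c_1 \cdots c_m \leq y$ one gets $b_1 \cdots b_n \cdot c_1 \cdots c_m \leq x \cdot y$, the left-hand side again being a finite product of elements of $B$. That $B \seq G$ is immediate: for $b \in B$ take $n = 1$ and $b_1 = b$, so that $b \leq b$ witnesses $b \in G$.

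For minimality, I would take an arbitrary deductive filter $F$ with $B \seq F$ and any $x \in G$, say $b_1 \cdots b_n \leq x$ with all $b_i \in B \seq F$; since $F$ is closed under $\cdot$ (and contains $1$), the product $b_1 \cdots b_n$ lies in $F$, and since $F$ is an up-set, $x \in F$. Hence $G \seq F$, and combining with the previous paragraph, $G = F(B)$. The argument is entirely routine; the only points that need care are pinning down the precise working definition of a deductive filter in this setting and the boundary case $n = 0$ (equivalently $B = \emptyset$, where $G$ is the trivial filter $\{ x \in A : 1 \leq x \}$). I do not expect any genuine obstacle.
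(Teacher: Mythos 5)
Your proof is correct. The paper itself gives no argument for this lemma---it is quoted directly from \cite[p.~261]{GJKO}---and your write-up is exactly the standard one: verify that the displayed set is a deductive filter containing $B$ and is contained in any such filter. You are also right to flag the two points that need care: the characterisation of deductive filters as multiplicative up-sets containing $1$ is the one valid in the commutative, integral setting (which is the setting of the whole paper, even though the lemma nominally says ``residuated lattice''; in general one would need closure under conjugates, and the products would have to be of the elements $b_i\wedge 1$), and the empty-product convention is needed to make the statement literally true when $B=\emptyset$, although the paper only ever applies the lemma to nonempty (indeed singleton or finite) generating sets.
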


\begin{lemma}\label{l:principal-filters}
Let $A\in\kRL$.  
\begin{enumerate}
\item\label{l:principal-filters-item1} Each finitely generated filter of $A$ is a principal lattice filter.
\item\label{l:principal-filters-item2} If $F$ is a finitely generated deductive filter of $A$ and $\theta_{F}$ the associated congruence, then $a \mathrel{\theta_{F}} b$ if, and only if, $d\cdot a=d\cdot b$, where $d=\min F$ is the minimum element of $F$.
\end{enumerate}
\end{lemma}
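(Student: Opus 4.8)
For part \eqref{l:principal-filters-item1}, I would start from the explicit description of a finitely generated deductive filter given by \cref{l:filter-explicit}: if $F = F(B)$ with $B = \{b_1, \dots, b_m\}$ finite, then $F = \{x \in A \mid b_{i_1}\cdot\ldots\cdot b_{i_n} \leq x \text{ for some } b_{i_1},\dots,b_{i_n}\in B\}$. The key observation is that, because of integrality ($b_i \leq 1$) and $k$-potency, there are only finitely many products of elements of $B$ that matter: any product of elements of $B$ is, by commutativity and $k$-potency, below $b_1^{k}\cdots b_m^{k}$, and integrality makes longer products only smaller. So setting $d := b_1^{k}\cdot\ldots\cdot b_m^{k}$, every element $b_{i_1}\cdots b_{i_n}$ with entries from $B$ satisfies $d \leq b_{i_1}\cdots b_{i_n}$ (each factor $b_{i_j}^{p}$ with $p\geq 1$ absorbs into $b_{i_j}^k$ by $k$-potency once $p\geq k$, and $p< k$ gives something even larger since the $b_i\le 1$). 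Hence $F(B) = {\uparrow}d = \{x \in A \mid d \leq x\}$, which is exactly the principal lattice filter generated by $d$; in particular $d = \min F$.

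For part \eqref{l:principal-filters-item2}, I would use part \eqref{l:principal-filters-item1} to write $F$ as ${\uparrow}d$ with $d = \min F$, and recall that $a \mathrel{\theta_F} b$ iff $a \to b \in F$ and $b \to a \in F$, i.e.\ iff $d \leq a \to b$ and $d \leq b \to a$. By residuation, $d \leq a \to b$ is equivalent to $d \cdot a \leq b$, and symmetrically $d \leq b \to a$ iff $d \cdot b \leq a$. So $a \mathrel{\theta_F} b$ iff $d\cdot a \leq b$ and $d \cdot b \leq a$. From $d \cdot a \leq b$ we get $d \cdot a = d \cdot (d \cdot a) \leq d \cdot b$ (multiplying by $d$ and using $d\le 1$, i.e.\ $d\cdot d\le d$, together with order-preservation — more carefully, $d\cdot a \le d\cdot d\cdot a\le$ hmm, need $d\le d\cdot d$, which is false; instead simply multiply $d\cdot a\le b$ on both sides by $d$ to get $d\cdot d\cdot a\le d\cdot b$, and use $d$ idempotent? $d$ need not be idempotent). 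Let me instead argue: $d\cdot a\le b$ gives $d\cdot d\cdot a\le d\cdot b$; but I want $d\cdot a\le d\cdot b$. Since $d=\min F$ and $F$ is a deductive filter closed under $\cdot$, we have $d\cdot d\in F$, so $d\le d\cdot d$, hence $d\cdot a\le d\cdot d\cdot a\le d\cdot b$. Symmetrically $d\cdot b\le d\cdot a$, so $d\cdot a=d\cdot b$. Conversely, if $d\cdot a=d\cdot b$, then $d\cdot a\le b$ (since $d\cdot b\le b$ by integrality) and $d\cdot b\le a$, so $d\le a\to b$ and $d\le b\to a$, giving $a\mathrel{\theta_F}b$.

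The only mild subtlety — the step I would flag as needing care — is the claim $d \le d\cdot d$, i.e.\ that $\min F$ lies below its own square; this holds precisely because $F$ is a \emph{deductive} filter (closed under multiplication), so $d\cdot d\in F$ and therefore $d=\min F\le d\cdot d$. Everything else is a routine application of residuation, integrality, and the description of $\theta_F$; the genuinely new input beyond \cite{GJKO} is the finiteness argument in part \eqref{l:principal-filters-item1}, which is where $k$-potency is essential — without it a finitely generated deductive filter need not have a minimum.
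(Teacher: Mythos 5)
Your proposal is correct and follows essentially the same route as the paper: part (1) via $d=b_1^k\cdots b_m^k$ and the explicit description of generated deductive filters, and part (2) via residuation together with the key auxiliary fact that $d\cdot x\leq y$ iff $d\cdot x\leq d\cdot y$, which both you and the paper justify using the idempotency of $d$ (you derive $d\leq d\cdot d$ from closure of the deductive filter under $\cdot$, the paper from the explicit form of $d$ and $k$-potency — an immaterial difference). The self-correction in the middle of your part (2) resolves itself correctly, so there is no gap.
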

\begin{proof}
To prove item \ref{l:principal-filters-item1} suppose $B\seq A$ is finite, say $B=\{b_{1},\dots,b_{n}\}$. Let us set $d= b_{1}^{k}\cdot\ldots\cdot b_{n}^{k}$.  By \cref{l:principal-filters} and the fact that $A$ is commutative, integral, and $k$-potent, $d$ is smaller or equal to any product of elements of $B$ and obviously $d\in F(B)$.  Hence we have that $F(B)=\left\{x\in A\mid d\leq x\right\}$.

For item \ref{l:principal-filters-item2}, suppose that $F=\left\{x\in A\mid d\leq x\right\}$, and note that then $d$ has to be idempotent. We have that $a \mathrel{\theta_{F}} b$ if, and only if $a\to b,b\to a\in F$, iff $d\leq a\to b,b\to a$ iff $d\cdot a\leq b$ and $d\cdot b\leq a$ iff $d\cdot a=d \cdot b$. 
For the last equivalence we used that $d \cdot x \leq y$ iff $d \cdot x \leq d \cdot y$, which we justify now. The backward direction follows from the fact that $d \cdot y \leq y$, since $d \leq 1$; the forward direction follows by multiplying by $d$ to obtain $d \cdot d \cdot x \leq d \cdot y$ and using the idempotency of $d$.
\end{proof}

The above lemma can be used to derive a stronger condition from the configuration $A\rightarrowtail C\twoheadleftarrow B$, as we show in the next lemma.
\begin{lemma}\label{lem:4.3}
Let $A,B,C\in\kRLfin$, $A$ be subdirectly irreducible, $f\colon B\twoheadrightarrow C$ be an epimorphism, and $h\colon A\rightarrowtail C$ be a $(\cdot,\vee, 1)$-embedding. Then there exists a $(\cdot,\vee, 1)$-embedding $g\colon A\rightarrowtail B$ such that $f\circ g=h$.
\begin{center}
\begin{tikzpicture}
\matrix(m)[matrix of math nodes, row sep=4em, column sep=4em, ampersand replacement=\&]{
C\& \\
A\& B\\
};
\path[->>] (m-2-2) edge node[right] {$f$} (m-1-1);
\path[>->] (m-2-1) edge node[left] {$h$} (m-1-1);
\path[dashed,>->] (m-2-1) edge node[below] {g} (m-2-2);
\end{tikzpicture}
\end{center}
\end{lemma}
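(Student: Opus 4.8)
The plan is to lift $h$ through $f$ in two moves: first build a $(\cdot,\vee)$-section of $f$, and then repair it into a $1$-preserving embedding, the repair being the place where subdirect irreducibility of $A$ enters.

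\emph{Step 1: a $(\cdot,\vee)$-section of $f$.} Let $F=f^{-1}(1_C)$ be the deductive filter of $B$ corresponding to $\ker f$. Since $B$ is finite, $F$ is finitely generated, so by \cref{l:principal-filters} it is the principal lattice filter $\{x\in B\mid d\le x\}$ for an idempotent element $d=\min F$, and $f(b_1)=f(b_2)$ iff $d\cdot b_1=d\cdot b_2$. Define $\sigma\colon C\to B$ by $\sigma(c)=d\cdot b$ for an arbitrary $b\in f^{-1}(c)$; this does not depend on the choice of $b$, because $f(b)=f(b')$ implies $d\cdot b=d\cdot b'$. A routine computation, using commutativity, distributivity of $\cdot$ over $\vee$, and idempotency of $d$, shows that $\sigma$ is a $(\cdot,\vee)$-homomorphism with $\sigma(1_C)=d$, that $f\circ\sigma=\mathrm{id}_C$ (here one uses $f(d)=1_C$, as $d\in F$), and hence that $\sigma$ is injective; its image is $\{d\cdot b\mid b\in B\}$.

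\emph{Step 2: transporting $h$ and the top-swap.} Put $D_0:=\sigma[h[A]]\seq B$. Then $\sigma\circ h\colon A\to D_0$ is a $(\cdot,\vee)$-isomorphism sending $1_A$ to $d$; since $A$ is finite and subdirectly irreducible, \cref{t:s-i-opremum} transfers along this isomorphism to give that $d$ is completely join irreducible in $D_0$ and that $D_0$ has a second-greatest element $s_0$, with $x\le s_0<d$ for every $x\in D_0\setminus\{d\}$. The obstacle is that the unit of $D_0$ is $d$, not $1_B$, so $D_0$ is not yet a $(\cdot,\vee,1)$-subalgebra of $B$ and $\sigma\circ h$ is not yet a $(\cdot,\vee,1)$-embedding. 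I would fix this by setting $D_1:=(D_0\setminus\{d\})\cup\{1_B\}$: using integrality (whence $x\cdot y\le x\le s_0<d$ for $x,y\in D_0\setminus\{d\}$) one checks that $D_1$ is closed under $\cdot$ with unit $1_B$, and using complete join irreducibility of $d$ in $D_0$ one checks that it is closed under $\vee$; thus $D_1$ is a $(\cdot,\vee,1)$-subalgebra of $B$. Moreover the bijection $\phi\colon D_0\to D_1$ that fixes $D_0\setminus\{d\}$ and sends $d\mapsto 1_B$ is a $(\cdot,\vee,1)$-isomorphism, the two nontrivial cases being handled again by join irreducibility of $d$ and by $1_B$ being the monoid unit of $B$.

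\emph{Step 3: conclusion.} Take $g:=\phi\circ\sigma\circ h\colon A\to B$. Being a composite of a $(\cdot,\vee,1)$-embedding followed by $(\cdot,\vee,1)$-isomorphisms onto subalgebras, $g$ is a $(\cdot,\vee,1)$-embedding. Finally $f\circ\phi$ agrees with $f$ on all of $D_0$ — the only point $\phi$ moves is $d\mapsto 1_B$, and $f(1_B)=1_C=f(d)$ — so $f\circ g=f\circ\sigma\circ h=h$. The main difficulty is entirely in Step~2: keeping careful track of which unit ($d$ or $1_B$) is in force on $D_0$, and observing that it is exactly the existence of a genuine second-greatest element in $A$ — i.e.\ subdirect irreducibility — that makes the top-swap $D_1$ closed under join. (If $A$ is trivial all of this is vacuous and $g$ sends the unique element of $A$ to $1_B$.)
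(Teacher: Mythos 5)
Your proof is correct and follows essentially the same route as the paper's: both build the $(\cdot,\vee)$-section of $f$ through the subset $Bd=\{d\cdot b\mid b\in B\}$ (your $\sigma$ is exactly the paper's $in\circ\phi^{-1}\circ i$) and then repair the unit by redefining the image of $1_A$ from $d$ to $1_B$, with subdirect irreducibility of $A$ guaranteeing that $1_A$ is not a product or join of non-unit elements so the swap preserves the operations. The only difference is presentational: you package the repair as an explicit isomorphism $D_0\to D_1$ of partial subalgebras, whereas the paper simply redefines $g$ at $1_A$ and checks it remains a homomorphism.
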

\begin{proof}
Note that, since $B$ is finite, $F=\ker(f)$ is necessarily finitely generated, so it has a minimum element $d$, by \cref{l:principal-filters}(1).
We define $Bd=\{b \cdot d  \mid  b \in B\}$ and we note that it is a $(\cdot, \vee)$-subalgebra of $B$, since $d b_1  \vee d b_2=d(b_1 \vee b_2)$ and $d b_1 \cdot d b_2=d b_1b_2$, by the idempotency of $d$. Recall that $C=B/F=\{[b]_F  \mid  b \in B\}$, and note that by \cref{l:principal-filters}(2) and the idempotency of $d$, for all $b \in B$ we have $b \mathrel{\theta_F} db$, namely $[b]_F=[db]_F$; thus $B/F=\{[db]_F  \mid  b \in B\}=\{[c]_F  \mid  c \in Bd\}$. This proves that the map $\phi:Bd \to B/F$, given by $\phi(x)=[x]_F$ is onto. It is also injective, since $[db_1]_F=[db_2]_F$ implies $db_1 \mathrel{\theta_F} db_2$, which yields $ddb_{1}=ddb_2$, by \cref{l:principal-filters}(2), and $db_1=db_2$, by the idempotency of $d$. By the definition of the operations on $B/F$ it is clear that $\phi$ is then a $(\cdot, \vee)$-homomorphism, so $\phi$ is a $(\cdot, \vee)$-isomorphism. 

Composing the embedding $h:A \to C$, the natural isomorphism $i:C \to B/F$, the $(\cdot, \vee)$-isomorphism $\phi^{-1}:B/F \to Bd$ and the inclusion $in:Bd \to B$, we get a $(\cdot, \vee)$-embedding  $g_d\colon A\to B$; namely $g=in \circ \phi^{-1} \circ i \circ h$. Also, since $[b]_F=[db]_F$, for all $b \in B$, we deduce that  $f \circ (in \circ\phi^{-1} \circ i) = id_C$, and hence $f \circ g_d= f \circ in \circ \phi^{-1} \circ i \circ h= id_C \circ h=h$.

 We now define $g:A \to B$ by $g(1)=1$ and $g(x)=g_d(x)$, otherwise. Note that $1$ is not the result of a product $x \cdot y$ or a join $x \vee y$, for $x, y \in A \setminus \{1\}$, since $A$ is subdirectly irreducible and so well-connected, hence $g$ is still a $(\cdot, \vee)$-homomorphism, but it now also becomes a $(\cdot, \vee, 1)$-homomorphism. Finally, because $1$ is not an element of $Bd$, $g$ is actually a $(\cdot, \vee, 1)$-embedding. Finally, $f\circ g=h$, since $f(g(1_A))=f(1_B)=1_C=h(1_A)$, and for $x \not =1$, $f(g(x))=f(g_d(x))=h(x)$.
\end{proof}

Let $A$ be a finite algebra in $\kRL$. We let $\gamma(A)$ denote the canonical formula $\gamma(A, \emptyset,\emptyset)$.

\begin{theorem}\label{thm:5.2}
Let $A,B\in\kRLsi$, with $A$ finite. Then $B\not\models\gamma(A)$ if, and only if,  there is a $(\cdot,\vee, 1)$-embedding of $A$ into $B$.
\end{theorem}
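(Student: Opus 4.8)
The plan is to deduce \cref{thm:5.2} from the general machinery already built up, specialising the parameter sets to $\Dw=\Dt=\emptyset$ and exploiting the lifting result \cref{lem:4.3}. For the forward direction, assume $B\not\models\gamma(A)=\gamma(A,\emptyset,\emptyset)$. Since $B\in\kRLsi$, I would first pass to a subdirectly irreducible homomorphic image witnessing the refutation; in fact the argument of \cref{thm:4}, \eqref{thm:4:item1}$\Rightarrow$\eqref{thm:4:item2}, applies verbatim with $\Dw=\Dt=\emptyset$ and yields a subdirectly irreducible homomorphic image $C$ of $B$ together with a $(\emptyset,\emptyset)$-embedding $h\colon A\dmap C$, which is just a $(\cdot,\vee,1)$-embedding of $A$ into $C$. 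If $B$ is finite then $C$ is a finite homomorphic image of $B$, and \cref{lem:4.3} produces a $(\cdot,\vee,1)$-embedding $g\colon A\rightarrowtail B$, as desired. For the converse, suppose $g\colon A\rightarrowtail B$ is a $(\cdot,\vee,1)$-embedding; then $g$ is trivially a $(\emptyset,\emptyset)$-embedding $A\dmap B$, and since both $A$ and $B$ are in $\kRLsi$, \cref{l:A-refutes-gamma}(\ref{l:A-refutes-gamma:2}) gives $B\not\models_1\gamma(A,\emptyset,\emptyset)=\gamma(A)$, hence in particular $B\not\models\gamma(A)$.

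The main obstacle is that \cref{lem:4.3} is stated only for \emph{finite} algebras $B$, whereas \cref{thm:5.2} allows $B$ to be an arbitrary subdirectly irreducible algebra in $\kRL$. To handle an infinite well-connected $B$ I would argue as follows. Given $B\not\models\gamma(A)$, as above we obtain a subdirectly irreducible homomorphic image $C$ of $B$ and a $(\cdot,\vee,1)$-embedding $h\colon A\rightarrowtail C$. The image $h[A]$ is finite, and since $A$ is subdirectly irreducible, its second greatest element $s_A$ is carried to the second greatest element of the finite well-connected subalgebra $h[A]$. Now choose, for each $a\in A$, a preimage $b_a\in B$ with $q(b_a)=h(a)$, where $q\colon B\twoheadrightarrow C$; the subalgebra $B_0$ of $B$ generated by $\{b_a\mid a\in A\}$ need not be finite, but its $(\cdot,\vee,1)$-reduct is finitely generated, hence finite by \cref{lem:M(n)}. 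Replacing $B$ by $B_0$ (after checking $B_0$ is still well-connected, equivalently that $1$ is completely join irreducible in $B_0$ — this follows since $B_0$ sits inside $B$ and $q\restriction B_0$ maps onto a well-connected algebra, so one can invoke \cref{l:well-connect-subdirectly}), and $C$ by $q[B_0]$, puts us in the finite setting where \cref{lem:4.3} applies and yields $g\colon A\rightarrowtail B_0\hookrightarrow B$.

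I expect the delicate point to be verifying that the finitely generated $(\cdot,\vee,1)$-subalgebra $B_0$ can be chosen so that it is genuinely an algebra in $\kRL$ (with the induced $\wedge,\to$) and is still subdirectly irreducible — but this is exactly what \cref{lem:4.4} and \cref{l:well-connect-subdirectly} were set up to deliver: \cref{lem:4.4} lets us close $\Sub_v$-type subsets of $B$ into a finite algebra in $\kRL$ that is a $(\cdot,\vee,1)$-subalgebra of $B$, and \cref{l:well-connect-subdirectly} upgrades finiteness plus well-connectedness to subdirect irreducibility. So the whole proof is really an assembly of \cref{l:A-refutes-gamma}, \cref{thm:4}, \cref{lem:4.3}, and the reduction-to-finite argument; no new ideas beyond organising these pieces. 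It is worth noting that for the converse direction one does not even need $B$ finite, so the asymmetry in the hypotheses is only an artifact of how the forward direction is proved.
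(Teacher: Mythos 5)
Your backward direction is exactly the paper's (apply \cref{l:A-refutes-gamma}\,(\ref{l:A-refutes-gamma:2}) with $\Dw=\Dt=\emptyset$), and your overall plan for the forward direction --- reduce to a finite configuration $A\rightarrowtail C\twoheadleftarrow (\text{finite algebra})$ and invoke \cref{lem:4.3} --- is the right idea. But the reduction you propose for infinite $B$ has a genuine gap. After applying \cref{thm:4} to $B$ you have $q\colon B\twoheadrightarrow C$ and $h\colon A\rightarrowtail C$, and you pass to the finite $(\cdot,\vee,1)$-subalgebra $B_0$ generated by chosen preimages $b_a$. However, \cref{lem:4.3} requires an \emph{epimorphism of residuated lattices} between finite algebras of $\kRL$, and $q\restriction B_0\colon B_0\to q[B_0]$ is not one: the meet and implication of $B_0$ are the internally defined operations of \cref{lem:4.4}, which $q$ need not preserve, and $q[B_0]=h[A]$ need not even be closed under $\wedge_C$ and $\to_C$. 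More seriously, the proof of \cref{lem:4.3} hinges on $C$ being the quotient of the finite domain by the deductive filter $F=\ker f$, whose minimum $d$ gives $C\cong Bd$. In your setting $F\cap B_0$ is a deductive filter of $B_0$ with a minimum $d$, but the congruence it induces ($b_1\mathrel{\theta}b_2$ iff $d b_1=d b_2$, by \cref{l:principal-filters}) is in general strictly finer than the restriction of $\ker q$ to $B_0$: from $q(b_1)=q(b_2)$ one only gets $b_1\to_B b_2\in F$, not $d\leq b_1\to_B b_2$, since $d$ is the minimum of $F\cap B_0$ and not of $F$. So $q[B_0]$ need not be the residuated-lattice quotient of $B_0$ by any deductive filter of $B_0$, and the lifting argument does not transfer. (Your well-connectedness claim for $B_0$ is fine, but via the inclusion $B_0\hookrightarrow B$ rather than via $q$.)

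The fix is to reverse the order of shrinking and quotienting, which is what the paper does: first apply \cref{lem:4.4} to $B$ with $\f=\gamma(A)$ to obtain a finite $S\in\kRLfin$ that is a $(\cdot,\vee,1)$-subalgebra of $B$ and still refutes $\gamma(A)$; by \cref{l:well-connect-subdirectly}, $S$ is subdirectly irreducible. Then apply \cref{thm:4} to the finite algebra $S$ itself, producing a genuine residuated-lattice epimorphism from $S$ onto a finite subdirectly irreducible $C$ together with a $(\cdot,\vee,1)$-embedding $h\colon A\rightarrowtail C$. Now all three algebras are finite and the epimorphism is an honest quotient by a principal deductive filter, so \cref{lem:4.3} applies verbatim; the resulting $g\colon A\rightarrowtail S$ composed with the inclusion $S\hookrightarrow B$ gives the desired $(\cdot,\vee,1)$-embedding.
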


\begin{proof}
 For the forward direction, if $B\not\models\gamma(A)$, then by \cref{lem:4.4}  there is an $S\in\kRLfin$ which $(\cdot,\vee, 1)$-embeds into $B$ and refutes $\gamma(A)$. Since $B$ is subdirectly irreducible and $S$ is finite, by \cref{l:well-connect-subdirectly}, $S$ is also subdirectly irreducible. Next, by  \cref{thm:4}, there exists a subdirectly irreducible homomorphic image $C$ of $S$ and a $(\cdot, \vee, 1)$-embedding $h\colon A\rightarrowtail  C$. Notice that $C$ is finite, for it is a homomorphic image of $S$.   By \cref{lem:4.3}, $h$ lifts to a $(\cdot, \vee, 1)$-embedding $g\colon A\rightarrowtail S$. Since $S$ is a $(\cdot,\vee, 1)$-sublattice of $B$, we conclude that $g$ is a $(\cdot,\vee, 1)$-embedding of $A$ into $B$. 
 \begin{center}
\begin{tikzpicture}
\matrix(m)[matrix of math nodes, row sep=4em, column sep=4em, ampersand replacement=\&]{
C\& \&\\
A\& S\& B\\
};
\path[->>] (m-2-2) edge  (m-1-1);
\path[>->] (m-2-1) edge node[left] {$h$} (m-1-1);
\path[dashed,>->] (m-2-1) edge node[below] {$g$} (m-2-2);
\path[>->] (m-2-2) edge  (m-2-3);
\end{tikzpicture}
\end{center}
 The backward direction is an immediate consequence of \cref{l:A-refutes-gamma} with $\Dw=\Dt=\emptyset$.
\end{proof}

\begin{remark}
The reason \cref{thm:5.2} holds only for $\Dw=\Dt=\emptyset$ is that if $\Dw\ne\emptyset$ or $\Dt\ne\emptyset$, then the $(\cdot,\vee, 1)$-embedding $g\colon A\rightarrowtail B$ constructed in the proof of \cref{lem:4.3} may not preserve implications from $\Dt$ or meets from $\Dw$ even if $h\colon A\rightarrowtail C$ preserves them.
\end{remark}

We are ready to introduce stable extensions of $\FLk$.

\begin{definition}\label{def:5.4}
Let $\Va$ be a subvariety of $\kRL$.
We call $\Va$ \emph{stable} if the class $\Va_{si}$ of its subdirectly irreducible algebras is closed under subdirectly irreducible  $(\cdot,\vee, 1)$-subalgebras, namely
if $B \in \Va_{si}$, $A \in\kRLsi$ and $A$ is a $(\cdot,\vee, 1)$-subalgebra of $B$, then $A \in \Va_{si}$.
Equivalently, since $\Va_{si}$ is closed under isomorphisms, the condition can be phrased in terms of $(\cdot,\vee, 1)$-embeddings, namely whenever $A,B\in\kRLsi$ and $h\colon A\rightarrowtail B$ is a  $(\cdot,\vee, 1)$-embedding, $B\in \Va$ entails $A\in \Va$.  Let $\mathsf{L}$ be an extension of  $\FLk $. We say that $\mathsf{L}$ is stable if the equivalent algebraic semantics $\Va_{\mathsf{L}}$ of $\mathsf{L}$ is stable. 
\end{definition}

It can be easily seen that stable extensions include all subvarieties axiomatised by $(\cdot, \vee, 1)$-equations. The latter ones correspond to simple structural rules, when considering extensions of $\FLk $, and it is known, see for example \cite{GJ13}, that they all have the finite model property. Here we extend this result. 

\begin{theorem}\label{cor: stablefmp}
Each stable extension of  $\FLk $  has the finite model property.
\end{theorem}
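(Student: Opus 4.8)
To prove that every stable extension $\mathsf{L}$ of $\FLk$ has the finite model property, I would show that if $\mathsf{L}\not\vdash\f$ then there is a \emph{finite} algebra in $\Va_{\mathsf{L}}$ refuting $\f$. So suppose $\f$ is refuted by some $B\in\Va_{\mathsf{L}}$. First I would pass to a subdirectly irreducible homomorphic image $C$ of $B$ with $C\not\models\f$, which still lies in $\Va_{\mathsf{L}}$ since varieties are closed under homomorphic images. Then I would apply \cref{lem:4.4}: the partial subalgebra $\Sub_v(\f)$ of $C$ extends to a finite algebra $S\in\kRL$ that is a $(\cdot,\vee,1)$-subalgebra of $C$ and still refutes $\f$. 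Because $C$ is subdirectly irreducible and $S$ is a finite $(\cdot,\vee,1)$-subalgebra of it, \cref{l:well-connect-subdirectly} guarantees that $S$ is subdirectly irreducible as well.

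\textbf{Using stability.} Now the crucial point is that $S$ is a subdirectly irreducible $(\cdot,\vee,1)$-subalgebra of $C\in(\Va_{\mathsf{L}})_{si}$, so by the very definition of stability (\cref{def:5.4}), $S\in(\Va_{\mathsf{L}})_{si}\seq\Va_{\mathsf{L}}$. Thus $S$ is a finite algebra in $\Va_{\mathsf{L}}$ that refutes $\f$, which is exactly what is needed for the finite model property. Translating back to the logic: if $\mathsf{L}\not\vdash\f$, then there is a finite algebra in $\Va_{\mathsf{L}}$ on which $\f$ fails, so $\Va_{\mathsf{L}}$ — and hence $\mathsf{L}$ — is determined by its finite members.

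\textbf{Where the work really lies.} Essentially all the substance has already been done in the preceding lemmas: \cref{lem:4.4} is what lets us cut down an arbitrary countermodel to a finite partial structure and complete it inside $\kRL$, and \cref{l:well-connect-subdirectly} is what keeps subdirect irreducibility under $(\cdot,\vee,1)$-subalgebras. The only genuinely new ingredient is the observation that the finite algebra produced by \cref{lem:4.4} sits inside the original countermodel as a $(\cdot,\vee,1)$-subalgebra, so that the closure hypothesis built into stability applies directly to it. No appeal to \cref{thm:5.2} or to the canonical formulas $\gamma(A)$ is strictly necessary for this direction, though one could phrase the argument through them; the direct route via \cref{lem:4.4} and \cref{l:well-connect-subdirectly} is the cleanest. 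I do not anticipate a serious obstacle, since the interaction of homomorphic images and $(\cdot,\vee,1)$-subalgebras is exactly what the definition of stability is engineered to handle; the one point to be careful about is that the $(\cdot,\vee,1)$-subalgebra from \cref{lem:4.4} genuinely carries a full $\kRL$-structure (with the $\to_A$ and $\wedge_A$ defined there) making it an element of the variety, rather than merely a partial structure.
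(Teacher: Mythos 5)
Your proof is correct and follows essentially the same route as the paper: pass (via Birkhoff's subdirect representation) to a subdirectly irreducible member of $\Va_{\mathsf{L}}$ refuting $\f$, apply \cref{lem:4.4} to extract a finite subdirectly irreducible $(\cdot,\vee,1)$-subalgebra refuting $\f$ (using \cref{l:well-connect-subdirectly}), and invoke stability to conclude it lies in $\Va_{\mathsf{L}}$. The only cosmetic difference is that the paper invokes Birkhoff's theorem directly to obtain the subdirectly irreducible countermodel, whereas you spell out the passage to a subdirectly irreducible homomorphic image.
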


\begin{proof}
Let $\mathsf{L}$ be a stable extension of  $\FLk $ 
and let $L\not\vdash\varphi$. Then, by Birkhoff's theorem, there exists a subdirectly irreducible $B\in\Va_{\mathsf{L}}$ such that $B\not\models\varphi$. By \cref{lem:4.4}, there exists $A\in\kRL$ such that $A$ is a bounded $(\cdot, \vee,1)$-subalgebra of $B$ and $A\not\models\varphi$. 
Moreover, as $B$ is subdirectly irreducible, by \cref{l:well-connect-subdirectly} so is $A$. Since  $\Va_{\mathsf{L}}$ is stable, $A\in\Va_{\mathsf{L}}$, and as $A$ is finite and $A\not\models\varphi$, we conclude that $\mathsf{L}$ has the finite model property.
\end{proof}

In order to axiomatise stable $k$-potent logics, we recall the theory of frame-based formulas of \cite{Nic06, Nic08}. Although the theory was developed for frames, as was pointed out in \cite{BB12}, dualising frame-based formulas yields algebra-based formulas that we define here in the context of residuated lattices. 

\begin{definition}\label{d:algebra-order}
Let $\K$ be a class of s.i.\ residuated lattices.  We call $\precsim$ an \emph{algebra order} on $\K$ if it is a reflexive and transitive relation on $\K$ and has the following properties:
\begin{enumerate}
\item\label{d:algebra-order:item1} If $A,B\in\K$, $B$ is finite, and $A\prec B$, then $|A|< |B|$, where $A\prec B$ means that $A\precsim B$ and $A$ is not not isomorphic to $B$.
\item\label{d:algebra-order:item2} If $A\in\K$ is finite, then there exists a formula $\zeta(A)$ such that for each $B\in\K$, we have $A\precsim B$ if, and only if,  $B\not\models\zeta(A)$.
\end{enumerate}
The formula $\zeta(A)$ is called the \emph{algebra-based formula} of $A$ for $\precsim$.  
\end{definition}

The following criterion of axiomatisability by algebra-based formulas is a straightforward generalisation of \cite[Theorem~3.9]{Nic08} (see also \cite[Theorem~3.4.12]{Nic06} and \cite[Theorem~7.2]{BB12}). 
\begin{theorem}\label{thm:nick}
Let $\K\subseteq\K'$ be classes of s.i.\ residuated lattices and $\precsim$ be an algebra order on $\K'$. Then $\K$ is axiomatised, relatively to $\K'$, by algebra-based formulas for $\precsim$ if, and only if, 
\begin{enumerate}[\textup{(}a\textup{)}]
\item\label{thm:nick:item1} $\K$ is a down-set of $\K'$ with regard to $\precsim$.
\item\label{thm:nick:item2} For each $B\in\K'\setminus \K$, there exists a finite $A\in\K'\setminus\K$ such that $A\precsim B$.
\end{enumerate}
If \eqref{thm:nick:item1} and \eqref{thm:nick:item2} are satisfied, then $\K$ is axiomatised by the algebra-based formulas of the $\precsim$-minimal elements of $\K'\setminus\K$.
\end{theorem}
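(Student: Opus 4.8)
The plan is to follow the standard template for results of this type (e.g. \cite[Theorem~3.9]{Nic08} and \cite[Theorem~7.2]{BB12}), proving both implications via the characterisation of algebra-based formulas provided by \cref{d:algebra-order}. Throughout I would write $\zeta(A)$ for the algebra-based formula of a finite $A\in\K'$, so that the defining property reads: for all $B\in\K'$, $A\precsim B$ iff $B\not\models\zeta(A)$.

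\textbf{Sufficiency.} Assume \eqref{thm:nick:item1} and \eqref{thm:nick:item2}. Let $\Lambda=\{\zeta(A)\mid A\in\K'\setminus\K \text{ is finite and } \precsim\text{-minimal in } \K'\setminus\K\}$, and let $\K''$ be the subclass of $\K'$ axiomatised by $\Lambda$ relatively to $\K'$; I claim $\K''=\K$. For $\K\seq\K''$: if $B\in\K$ and $A$ is a finite $\precsim$-minimal element of $\K'\setminus\K$, then $A\not\precsim B$ (otherwise $A\in\K$ by the down-set property \eqref{thm:nick:item1}, a contradiction), hence $B\models\zeta(A)$; as this holds for every generator of $\Lambda$, $B\in\K''$. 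For $\K''\seq\K$: suppose $B\in\K'\setminus\K$. By \eqref{thm:nick:item2} there is a finite $A\in\K'\setminus\K$ with $A\precsim B$. Among all finite members of $\K'\setminus\K$ that are $\precsim$-below $B$ pick one, say $A_0$, of least cardinality; this set is non-empty (it contains $A$) so $A_0$ exists. Then $A_0$ is $\precsim$-minimal in $\K'\setminus\K$: if $A'\prec A_0$ with $A'\in\K'\setminus\K$ then, since $A_0$ is finite, property \eqref{d:algebra-order:item1} gives $|A'|<|A_0|$, and by transitivity $A'\precsim B$, contradicting the minimality of $|A_0|$. Hence $\zeta(A_0)\in\Lambda$, and since $A_0\precsim B$ we get $B\not\models\zeta(A_0)$, so $B\notin\K''$. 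This proves $\K''\seq\K$, and therefore $\K$ is axiomatised relatively to $\K'$ by $\Lambda$, i.e.\ by the algebra-based formulas of the $\precsim$-minimal elements of $\K'\setminus\K$ — which also gives the final assertion of the theorem.

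\textbf{Necessity.} Conversely, suppose $\K$ is axiomatised relatively to $\K'$ by some family $\{\zeta(A_j)\mid j\in J\}$ of algebra-based formulas for $\precsim$, with each $A_j$ finite. For \eqref{thm:nick:item1}: let $B\in\K$ and $B'\precsim B$ with $B'\in\K'$; I must show $B'\in\K$, i.e.\ $B'\models\zeta(A_j)$ for every $j$. By the axiomatisation, $B\models\zeta(A_j)$, so by the defining property of $\zeta(A_j)$ we have $A_j\not\precsim B$; if we had $A_j\precsim B'$ then transitivity of $\precsim$ would give $A_j\precsim B$, a contradiction, so $A_j\not\precsim B'$, whence $B'\models\zeta(A_j)$. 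Thus $B'\in\K$ and $\K$ is a down-set. For \eqref{thm:nick:item2}: let $B\in\K'\setminus\K$. Since $B$ fails the axiomatisation, there is $j\in J$ with $B\not\models\zeta(A_j)$, i.e.\ $A_j\precsim B$. It remains to see $A_j\notin\K$; but $A_j\precsim A_j$ by reflexivity, so $A_j\not\models\zeta(A_j)$, hence $A_j\notin\K$. Taking $A=A_j$ (which is finite) gives the required finite element of $\K'\setminus\K$ below $B$.

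\textbf{Main obstacle.} There is no deep obstacle here — the argument is purely order-theoretic once the defining property of algebra-based formulas is in hand. The one point requiring care is the extraction of a $\precsim$-minimal element in the sufficiency proof: this is exactly where hypothesis \eqref{thm:nick:item2} (existence of \emph{some} finite element below $B$) and property \eqref{d:algebra-order:item1} (strict inequality $A\prec B$ forces $|A|<|B|$ for finite $B$) are used together, the former to get a non-empty set to minimise over and the latter to guarantee that a cardinality-minimal element is in fact $\precsim$-minimal. One should also be mildly careful that "$\precsim$-minimal in $\K'\setminus\K$" is interpreted up to isomorphism, which is harmless since $\zeta$ is defined on isomorphism types and all the classes involved are closed under isomorphism.
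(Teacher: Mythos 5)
Your proof is correct and follows essentially the same route as the paper's: the necessity direction is the same reflexivity/transitivity argument, and the sufficiency direction uses the same axiomatisation by formulas of $\precsim$-minimal elements, with your least-cardinality extraction of a minimal element being just a slightly more explicit version of the paper's descending-chain argument via property \eqref{d:algebra-order:item1}.
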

\begin{proof}
For the forward direction, suppose $\K$ is axiomatised, relatively to $\K'$, by algebra-based formulas for $\precsim$.  Let $\{\zeta(A_{i})\mid i\in I\}$ be such an axiomatisation for $\K$ with $\{A_{i}\mid i\in I\}$ a family of finite, s.i.\ algebras in $\K'$.  We start by showing that $\K$ is a $\precsim$-down set.  Suppose that $A,B\in \K'$,  $A\precsim B$, $B\in\K$ and, by way of contradiction, $A\not\in \K$.  Then, there exists some $i\in I$ such that $A\not\models \zeta(A_{i})$.  So, by \cref{d:algebra-order} item \ref{d:algebra-order:item2}, $A_{i}\precsim A$ and by transitivity $A_{i}\precsim B$.  Again by \cref{d:algebra-order} item \ref{d:algebra-order:item2}, the latter fact gives $B\not\models\zeta(A_{i})$, which contradicts $B\in \K$.  Thus $\K$ is a $\precsim$-down set as in \eqref{thm:nick:item1}.  Similarly, if $B\in\K'\setminus \K$,  then there exists $i\in I$ such that $B\not\models\zeta(A_{i})$, so by \cref{d:algebra-order} item \ref{d:algebra-order:item2}, $A_{i}\precsim B$.  Notice that $A_{i}$ is finite s.i.\ and does not belong to $\K$, as by reflexivity $A_{i}\precsim A_{i}$, so $A_{i}\not\models\zeta(A_{i})$. This shows that also \eqref{thm:nick:item2} holds.

For the converse direction, suppose that \eqref{thm:nick:item1} and \eqref{thm:nick:item2} hold and consider the axiomatisation
\begin{align}
\label{eq:axioms}\{\zeta(A_{i})\mid A_{i} \text{ is } a \precsim\text{-minimal element of }\K'\setminus\K\}.
\end{align}
We prove that $\K$ is axiomatised by \eqref{eq:axioms}.  Let $A\in\K$ and $A_{i}$  be an arbitrary $\precsim$-minimal element of $\K'\setminus\K$.  Since by \eqref{thm:nick:item1} $\K$ is a down set, $A_{i}\not\precsim A$.  But then, by \cref{d:algebra-order} item \ref{d:algebra-order:item2},  $A\models\zeta(A_{i})$.  As $A_{i}$ was arbitrary, $A$ validates  all formulas in \eqref{eq:axioms}.  Vice versa, if $A\not\in\K$ then by  \eqref{thm:nick:item2}, there exists a finite $B\in\K'\setminus\K$ such that $B\precsim A$.  Suppose that there is $C\precsim B$, then either $C$ is isomorphic to $B$ or $C\precsim B$, hence by \cref{d:algebra-order:item1} in \cref{d:algebra-order}, $|C|<|B|$. Since $B$ is finite, there must be a $\precsim$-minimal algebra below $B$, say $A_{i}$, such that $A_{i}\precsim B$.  
But then, by transitivity, $A_{i}\precsim A$. Therefore, $A\not\models\zeta(A_{i})$, which finishes the proof. 
\end{proof}

We are ready to prove that stable $k$-potent logics are axiomatised by formulas of the form $\gamma(A)$.

\begin{theorem}\label{thm:5.4}
An extension $\mathsf{L}$ of $\FLk$ is stable if, and only if,  there is a family $\{A_i\mid i\in I\}$ of algebras in $\kRLsifin$ such that $\mathsf{L}$ is axiomatised by $\{\gamma(A_{i})\mid i\in I\}$.
\end{theorem}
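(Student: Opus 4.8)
The plan is to derive the biconditional from the general axiomatisation criterion \cref{thm:nick}, applied with $\K'=\kRLsi$, the class of all subdirectly irreducible algebras in $\kRL$, and $\K=\Va_{\mathsf{L},si}$, the subdirectly irreducibles of the variety $\Va_{\mathsf{L}}$ corresponding to $\mathsf{L}$. The algebra order I would use on $\K'$ is the one induced by $(\cdot,\vee,1)$-embeddings: put $A\precsim B$ iff there is a $(\cdot,\vee,1)$-embedding $A\rightarrowtail B$. Reflexivity and transitivity are immediate. For the size condition \eqref{d:algebra-order:item1} of \cref{d:algebra-order}, I would observe that a \emph{bijective} $(\cdot,\vee,1)$-embedding between finite algebras is automatically a full isomorphism, since in a residuated lattice the order ($x\leq y\iff x\vee y=y$), the meet, and the residual $\to$ are all recoverable from $\vee$ and $\cdot$; hence $A\prec B$ with $B$ finite forces $|A|<|B|$. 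Property \eqref{d:algebra-order:item2} is precisely \cref{thm:5.2}: for $A$ finite and $B\in\kRLsi$, $A\precsim B$ iff $B\not\models\gamma(A)$, so $\gamma(A)$ plays the role of the algebra-based formula $\zeta(A)$.

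With this setup the two hypotheses of \cref{thm:nick} separate cleanly. Condition \eqref{thm:nick:item1}, that $\K$ be a $\precsim$-downset of $\K'$, unwinds verbatim to the embedding reformulation of stability in \cref{def:5.4}, so it holds iff $\mathsf{L}$ is stable. Condition \eqref{thm:nick:item2}, that every $B\in\K'\setminus\K$ lie above a finite member of $\K'\setminus\K$, I would verify exactly as in the proof of \cref{cor: stablefmp}: if $B$ is subdirectly irreducible and some theorem $\varphi$ of $\mathsf{L}$ fails on $B$ under a valuation $v$, then by \cref{lem:4.4} the partial subalgebra $\Sub_v(\varphi)$ extends to a finite $A\in\kRL$ that is a $(\cdot,\vee,1)$-subalgebra of $B$ and still refutes $\varphi$; this $A$ is non-trivial, hence subdirectly irreducible by \cref{l:well-connect-subdirectly} (using that $B$, being subdirectly irreducible, is well-connected), and $A\notin\Va_{\mathsf{L}}$. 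So $A\in\K'\setminus\K$ is finite with $A\precsim B$. Note \eqref{thm:nick:item2} holds for \emph{every} extension of $\FLk$, which is what makes both implications fall out.

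For the forward direction I would then invoke \cref{thm:nick}: assuming $\mathsf{L}$ stable, both conditions hold, so $\K$ is axiomatised relative to $\K'$ by the formulas $\gamma(A_i)$ of the $\precsim$-minimal elements $A_i$ of $\K'\setminus\K$; each $A_i$ is finite, since by \eqref{thm:nick:item2} some finite $A'\precsim A_i$ lies in $\K'\setminus\K$ and minimality forces $A'\cong A_i$, so $A_i\in\kRLsifin$. It remains to pass from ``axiomatised relative to $\K'$'' to ``axiomatised as a logic'': the subvariety of $\kRL$ defined by $\{\gamma(A_i)\mid i\in I\}$ has exactly $\K$ as its class of subdirectly irreducibles, hence equals $\Va_{\mathsf{L}}$ by Birkhoff's subdirect representation theorem, and via algebraisation $\mathsf{L}=\FLk+\{\gamma(A_i)\mid i\in I\}$. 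The converse direction needs no general machinery: if $\mathsf{L}$ is axiomatised by $\{\gamma(A_i)\}$ with $A_i\in\kRLsifin$, and $h\colon A\rightarrowtail B$ is a $(\cdot,\vee,1)$-embedding with $B\in\Va_{\mathsf{L},si}$ and $A\in\kRLsi$, then $A\notin\Va_{\mathsf{L}}$ would give $A\not\models\gamma(A_i)$ for some $i$, hence a $(\cdot,\vee,1)$-embedding $A_i\rightarrowtail A$ by \cref{thm:5.2}; composing with $h$ and reapplying \cref{thm:5.2} yields $B\not\models\gamma(A_i)$, contradicting $B\in\Va_{\mathsf{L}}$. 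So $A\in\Va_{\mathsf{L},si}$ and $\mathsf{L}$ is stable.

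I expect the only real obstacles to be bookkeeping rather than conceptual: carefully checking the size condition \eqref{d:algebra-order:item1} (the ``bijective $(\cdot,\vee,1)$-embedding is an isomorphism'' observation, which uses that $\leq$, $\wedge$, and $\to$ are definable from $\vee$ and $\cdot$), and making precise the translation between the relative axiomatisation of $\K$ inside $\K'$ furnished by \cref{thm:nick} and the intended statement about the logic $\mathsf{L}$. The substantive ingredients — \cref{thm:5.2} and the finite-embeddability content packaged in \cref{lem:4.4} together with \cref{l:well-connect-subdirectly} — are already in hand, so no new hard estimate is needed.
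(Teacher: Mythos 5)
Your proposal follows essentially the same route as the paper: the backward direction via \cref{thm:5.2} and composition of $(\cdot,\vee,1)$-embeddings, and the forward direction by equipping $\kRLsi$ with the embedding order $\precsim$, taking $\gamma(A)$ as the algebra-based formula (again via \cref{thm:5.2}), and verifying conditions (a) and (b) of \cref{thm:nick} using stability and \cref{lem:4.4} together with \cref{l:well-connect-subdirectly}. Your argument that a bijective $(\cdot,\vee,1)$-embedding is a full isomorphism (hence condition \eqref{d:algebra-order:item1} holds) correctly fills in a detail the paper dismisses as trivial, and the rest matches the paper's proof.
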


\begin{proof}
First suppose that there exists a family $\{A_i\mid i\in I\}$ of algebras in $\kRLsifin$ such that $L = \FLk   + \{\gamma(A_{i})\mid i\in I\}$. Let $A,B\in\kRLsi$, $h\colon A\rightarrowtail B$ be a $(\cdot,\vee, 1)$-embedding, and $B\in\Va_{\mathsf{L}}$. If $A\notin\Va_{\mathsf{L}}$, then there exists $i\in I$ such that $A\not\models\gamma(A_i)$. By \cref{thm:5.2}, there exists a $(\cdot,\vee, 1)$-embedding $h_i\colon A_i\rightarrowtail A$. Therefore, $h\circ h_i$ is a 
$(\cdot,\vee, 1)$-embedding of $A_i$ into $B$. Applying \cref{thm:5.2} again yields $B\not\models\gamma(A_i)$, so $B\notin\Va_{\mathsf{L}}$. 
The obtained contradiction proves that $\Va_{\mathsf{L}}$ is stable. We conclude that $\mathsf{L}$ is stable.

Conversely, suppose that $\mathsf{L}$ is stable. Define $\precsim$ on $\kRLsi$ by $A\precsim B$ if there is a $(\cdot,\vee, 1)$-embedding from $A$ into $B$. It is straightforward to see that $\precsim$ is reflexive and transitive. To see that $\precsim$ is an algebra order, observe that condition (1) of \cref{d:algebra-order} is satisfied trivially. For condition (2), if $A,B\in\kRLsi$ with $A$ finite, \cref{thm:5.2} yields that $A\precsim B$ if, and only if,  $B\not\models\gamma(A)$. Therefore, $\precsim$ is an algebra order on $\kRLsi$ and $\gamma(A)$ is the algebra-based formula of $A$ for $\precsim$. It remains to verify that $\precsim$ satisfies conditions (a) and (b) of \cref{thm:nick}. Since $\Va_{\mathsf{L}}$ is stable, $(\Va_{\mathsf{L}})_{{\sf si}}$ is a down-set of $\kRLsi$, and so $\precsim$ satisfies condition (a). For condition (b), let $B\in\kRLsi\setminus(\Va_{\mathsf{L}})_{{\sf si}}$. Then $B\not\models\varphi$ for some theorem $\varphi$ of $\mathsf{L}$. By \cref{lem:4.4}, there is $A\in\kRLsifin$ such that $A$ is a $(\cdot,\vee, 1)$-sublattice 
of $B$ and $A\not\models\varphi$. This implies that $A\in\kRLsi\setminus(\Va_{\mathsf{L}})_{{\sf si}}$ and $A\precsim B$. Thus, $\precsim$ satisfies condition (b), and hence, by \cref{thm:nick}, the family 
\[\{\gamma(A)\mid A\text{ is a $\precsim$-minimal element of }\kRLsi\setminus(\Va_{\mathsf{L}})_{{\sf si}}\}\] axiomatises $\mathsf{L}$.
\end{proof}

We note that using the normal form representation given in  \cite{APT:LICS, APT:APAL} it is easy to see that each
formula appearing on level $\mathcal{P}_3$ of the substructural hierarchy is provably equivalent over intuitionistic logic  to an
$\mathsf{ONNILLI}$-formula \cite{BdJ16}. Consequently, all formulas in the class $\mathcal{P}_3$ axiomatise stable intermediate logics 
\cite[Thm. 5]{BdJ16}. It is an open question whether all stable intermediate logics are axiomatisable by $\mathcal{P}_3$-axioms. We leave open the questions whether 
every $\mathcal{P}_3$-formula gives rise to stable extensions of $\FLk$ and whether all 
stable extensions of $\FLk$ are axiomatisable by $\mathcal{P}_3$-formulas.

We conclude this section by noting that the cardinality of stable extensions of $\FLk$ is that of the continuum. This result directly follows from the fact that there is already 
a continuum of stable extensions of the intuitionistic propositional calculus {\sf IPC}, and {\sf IPC} is an extension of $\FLk$. One may also wonder what is the cardinality of the interval between $\FLk$ and ${\sf IPC}$. For showing that there is a continuum of 
such logics it is sufficient to construct an infinite $\precsim$-antichain of algebras $\kRLsifin$ that are not Heyting algebras, and then apply the standard argument using stable formulas
(see e.g., \cite{Jan68},  \cite[Theorem~11.19]{CZ97}, \cite[Theorem~3.14]{Nic08}, \cite[Theorem~3.4.18]{Nic06}, \cite{Bezhanishvili2014a}). Such anti-chains are easier to construct in the varieties 
of Heyting and modal algebras since these algebras admit a dual representation via finite Kripke frames and for these structures the techniques of  combinatorial 
set-theory apply. 
While conjecturing that such an antichain exists, we leave it as an open problem here.

\section{Examples}\label{sec:examples}
We give here some applications of the results in the previous sections.  
\subsection{Pre-linear $k$-potent commutative, integral, residuated lattices}\label{ss:pl-k-RL}
Consider the class $\Lin$ of linearly ordered algebras in $\kRL$.  We illustrate our results by providing an alternative to the known (see \cite{GJKO}, for example) axiomatisation for the variety $\Va(\Lin)$ generated by $\Lin$.  

It is known that the subdirectly irreducible algebras in $\Va(\Lin)$ are linearly ordered, see \cite{GJKO}, for example.
Consider now the following lattices:
\begin{center}
\begin{tikzpicture}[scale=0.18]
\node (name1) at (-3,3)   {$A_{0}=$};
\node (A11) at (3,0)   {$\bullet$};
\node (A12) at (0,3) {$\bullet$};
\node (A13) at (3,6) {$\bullet$};
\node (A14) at (6,3) {$\bullet$};
\node (A15) at (3,9)   {$\bullet$};
\draw [-] (A11) -- (A12) -- (A13) -- (A14) -- (A11);
\draw [-] (A13) -- (A15);
\node (name2) at (19,3)   {$A_{1}=$};
\node (A20) at (26,-3)   {$\bullet$};
\node (A21) at (26,0)   {$\bullet$};
\node (A22) at (23,3) {$\bullet$};
\node (A23) at (29,3) {$\bullet$};
\node (A24) at (26,6)   {$\bullet$};
\node (A25) at (26,9)   {$\bullet$};
\draw [-] (A21) -- (A22) -- (A24) -- (A23) -- (A21);
\draw [-] (A24) -- (A25);
\draw [-] (A20) -- (A21);
\node (name2) at (36,3)   {$\ldots$};
\node (name2) at (43,3)   {$A_{k^{2}}=$};
\node (dots) at (50,-7)   {$\bullet$};
\node (A30) at (50,-3)   {$\vdots$};
\node (A31) at (50,0)   {$\bullet$};
\node (A32) at (47,3)   {$\bullet$};
\node (A33) at (53,3)   {$\bullet$};
\node (A34) at (50,6)   {$\bullet$};
\node (A35) at (50,9)   {$\bullet$};
\draw [-] (A31) -- (A32) -- (A34) -- (A33) -- (A31);
\draw [-] (A34) -- (A35);
\node (A35) at (52,-3.5)   {$\resizebox{1.3em}{!}{\}}k^{2}$};
\end{tikzpicture}

Let $\mathbb{A}_i$ denote the class of all algebras in $\kRL$ whose lattice reduct is $A_i$.

\end{center}
\begin{lemma}\label{l:k-lemma}
An algebra $B\in\kRLsi$ does not belong to $\Va(\Lin)$ if, and only if, for some $A \in \mathbb{A}_{k^2}$,  $A\dmap B$, where $D=(\emptyset,\emptyset)$.
\end{lemma}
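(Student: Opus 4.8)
The plan is to use the characterisation of refutation by canonical formulas developed in the preceding sections, specialised to the case $\Dw=\Dt=\emptyset$. By \cref{thm:5.2}, for $A,B\in\kRLsi$ with $A$ finite we have $B\not\models\gamma(A)$ if and only if $A$ is a $(\cdot,\vee,1)$-subalgebra of $B$; hence the right-hand side of the claimed equivalence says precisely that some $A\in\mathbb{A}_{k^2}$ $(\cdot,\vee,1)$-embeds into $B$. So the lemma amounts to showing: a subdirectly irreducible $B\in\kRL$ fails to be in $\Va(\Lin)$ if and only if it has a $(\cdot,\vee,1)$-subalgebra whose lattice reduct is $A_{k^2}$.

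For the easy direction, suppose some $A\in\mathbb{A}_{k^2}$ $(\cdot,\vee,1)$-embeds into $B$ but, for contradiction, $B\in\Va(\Lin)$. Since the subdirectly irreducible members of $\Va(\Lin)$ are linearly ordered and $B$ is subdirectly irreducible, $B$ is a chain; but then every $(\cdot,\vee,1)$-subalgebra of $B$, in particular $A$, would be a chain, contradicting the fact that $A_{k^2}$ is not linearly ordered (it has the incomparable pair sitting below the coatom). For the converse, assume $B\in\kRLsi$ is not in $\Va(\Lin)$. Then $B$ is not linearly ordered, so there are incomparable $x,y\in B$. First I would pass to the join $x\vee y=:u$ and note $x,y<u$ with $x\vee y=u$; replacing $x,y$ by $x\wedge u$-type adjustments is unnecessary since $x,y\le u$ already. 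The point is to produce inside $B$ the five-element ``diamond plus a top'' configuration $A_0$, and then, using $k$-potency together with the fact that $B$ has a second-greatest element $s_B$ (by \cref{t:s-i-opremum}), to lengthen the tail below it to exactly $k^2$ coatom-steps, so that the $(\cdot,\vee,1)$-subalgebra generated is one of the $A_{k^2}$.

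The main work, and the expected obstacle, is the bookkeeping that forces the tail length to be exactly $k^2$ and that the generated $(\cdot,\vee,1)$-subalgebra has lattice reduct literally $A_{k^2}$ rather than something bigger or of the wrong shape. The natural candidates for the ``diamond'' are $x,y$ and $x\vee y$ and $x\cdot y$ (or a suitable power $x^i\cdot y^j$): since multiplication distributes over $\vee$ and is $k$-potent, the submonoid generated by $x$ and $y$ modulo the join is finite, of size bounded as in \cref{lem:M(n)}, so only finitely many distinct products $x^i y^j$ occur, with $0\le i,j\le k$; the smallest of these, $d=x^k y^k$, is idempotent and lies strictly below $u$ and below both $x$ and $y$. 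One then checks $x,y$ are incomparable, $x\vee y=u$, $x\wedge y\ge d$, and that collapsing the picture to the elements $\{d, x\wedge y\text{-value}, x\text{-value}, y\text{-value}, u\}$ together with powers of $u$ up to $u^k=u^{k+1}$ (which, by $k$-potency and integrality, sit in a chain of length at most $k$ below $u$) realises, after possibly identifying coincident values, a lattice of the form $A_i$ for some $i\le k^2$. To get exactly $A_{k^2}$ one uses that since $B\notin\Va(\Lin)$, no $\gamma(A_j)$ with $j<k^2$ can already witness non-membership in a way that blocks $j=k^2$; more directly, $\Va(\Lin)$ is known to be axiomatised by a single identity (prelinearity-type), and its failure in $B$ forces a copy of the canonical ``worst case'' width-2, tail-$k^2$ configuration, which is precisely $A_{k^2}$. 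Care is needed here: I expect the honest argument to run through the known axiomatisation of $\Va(\Lin)$ (cited as in \cite{GJKO}), showing that the negation of the defining identity, combined with $k$-potency, yields elements witnessing $A\dmap B$ for the specific $A\in\mathbb{A}_{k^2}$, and conversely that the presence of such an $A$ violates that identity. The lattice-shape verification — that the $(\cdot,\vee,1)$-closure of the chosen generators is exactly $A_{k^2}$ and not a proper quotient or extension — is the fiddly step, handled by the finiteness and explicit generators from \cref{lem:M(n)} plus a direct case check on products of the generators.
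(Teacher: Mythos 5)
Your easy direction matches the paper's (a $\vee$-embedding is an order embedding, so no algebra with lattice reduct $A_{k^2}$ sits inside a chain). The converse is where the proposal breaks down, and it breaks down at exactly the step you yourself flag as fiddly: you never actually produce a $(\cdot,\vee,1)$-subalgebra of $B$ of the required shape. Your fallback argument --- that the failure of a prelinearity-type identity ``forces a copy of the canonical worst case, which is precisely $A_{k^2}$'' --- is circular: the content of \cref{t:prelinear-axiom} is precisely that these formulas axiomatise $\Va(\Lin)$, so you cannot assume that non-membership manifests itself as an $A_{k^2}$-configuration. Moreover, your plan to measure the tail by ``coatom-steps'' below the second-greatest element of $B$ is off-target: in $A_{k^2}$ the chain of length $k^2$ sits \emph{below} the diamond, and it is realised by products of the two incomparable elements, not by elements near the top of $B$; likewise your candidate tail built from powers of $u=x\vee y$ has length at most $k$, not $k^2$, and its members need not lie below both $x$ and $y$.

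The paper's proof rests on two ideas absent from your proposal. First, a minimality trick: inside the finite set $Y$ of products of the two incomparable elements (finite by $k$-potency), one picks a pair $c,d$ that is \emph{minimally} incomparable; this forces $\{b\in B\mid b<c\}=\{b\in B\mid b<d\}$ and makes this common down-set a chain, so that the set consisting of $1$, $c\vee d$, $c$, $d$ and the (at most $k^2$) products $c^ic^j d$-type elements lying below both $c$ and $d$ is a $(\cdot,\vee,1)$-subalgebra of $B$ whose lattice reduct is $A_i$ for some $i\le k^2$ (closure under $\cdot$ uses $c^m d^m\le c,d$ and $c\cdot(c\vee d)=c^2\vee cd$). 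Without minimality, the subalgebra generated by your $x,y$ may contain further incomparable pairs and need not have the shape of any $A_i$. Second, the paper does \emph{not} try to force the tail to have length exactly $k^2$: it settles for some $i\le k^2$ and then bridges to $\mathbb{A}_{k^2}$ via the ordinal-sum construction $A\mapsto 2[A]$, which adds new bottom elements while staying in $\kRL$; this exploits the absence of $\bot$ as a constant of the signature, so that $A_i$-based algebras occur as $(\cdot,\vee,1)$-subalgebras of $A_{k^2}$-based ones. Since your argument contains neither ingredient, the hard direction of the lemma is not established.
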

\begin{proof}
 Every $\vee$-embedding is clearly also an order embedding (it preserves and reflects the order), so clearly none of the algebras based on $A_{k^2}$  $D$-embeds in $B$.

Vice versa, if $B\not\in \Va(\Lin)$ then $B$ is not linearly ordered, hence there must be at least two incomparable elements in $B$.  Consider the set $Y$ of all possible product combinations of these two elements.  By $k$-potency the set $Y$ is finite, hence there must exist elements $c,d$ which are minimally incomparable, i.e.\ $c$ and $d$ are incomparable and there exists no pair of incomparable elements $e,d$, with $e <c$ or $d<b$. Notice that if an element $e$ is below either $c$ or $d$, then it must be also below the other, for otherwise the new pair given by $e$ and the incomparable element would contradict the minimality of $c,d$; so the sets $\{b \in B  \mid  b<c\}$ and $\{b \in B  \mid  b<d\}$ are equal and totally-ordered.  We claim that the set
\[J:=\{1,c\vee d\}\cup\{e\in Y\mid e\leq c, d\}\]
is a $(\cdot,\vee, 1)$-subalgebra of $B$.  The closure under $\vee$ and 1 is obvious. To see that it is also closed under $\cdot$ notice that $c^{m}\cdot d^{m}\leq c, d$ for all $m\leq k$ and $c\cdot (c\vee d)= c^{2}\vee cd$, where both $c^{2}$ and $cd$ are below $c$, hence their join belongs to $J$.  It is straightforward that the cardinality of $J$ cannot exceed $k^{2}+2$. So $J$ is isomorphic to one of the $A_{i}$ in our list.

 The result follows from seeing that every algebra in $\mathbb{A}_i$ for $i \leq k^2$ embeds in some algebra in $\mathbb{A}_{k^2}$. Indeed, given any algebra $A$ in $\kRL$ we can construct a new algebra $2[A]$ (also denoted by $2 \oplus A$) that has one new bottom element, is still in $\kRL$ and has $A$ as a subalgebra; see \cite{GJKO} for details. Iterating this construction we see that we can construct an algebra based on $A_{k^2}$ as a superalgebra. 
\end{proof}

\begin{theorem}\label{t:prelinear-axiom}
The variety $\Va(\Lin)$ is axiomatised over $\kRL$ by $\{\gamma(A)\mid A \in \mathbb{A}_{k^2}\}$.
\end{theorem}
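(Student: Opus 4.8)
The plan is to combine \cref{t:equiv-n-potent}-style reasoning with the semantic characterisation just established in \cref{l:k-lemma}. Concretely, I would show that for every $B\in\kRL$ we have $B\models\bigwedge\{\gamma(A)\mid A\in\mathbb{A}_{k^2}\}$ if and only if $B\in\Va(\Lin)$, and since varieties are determined by their subdirectly irreducible members (Birkhoff), it suffices to check this equivalence for $B\in\kRLsi$. For such $B$, \cref{thm:5.2} (the $\Dw=\Dt=\emptyset$ instance of \cref{l:A-refutes-gamma} together with \cref{lem:4.3}) tells us that $B\not\models\gamma(A)$ is equivalent to the existence of a $(\cdot,\vee,1)$-embedding of $A$ into $B$, i.e. $A\dmap B$ with $D=(\emptyset,\emptyset)$. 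So $B\models\gamma(A)$ for all $A\in\mathbb{A}_{k^2}$ exactly when \emph{no} algebra in $\mathbb{A}_{k^2}$ $(\cdot,\vee,1)$-embeds into $B$, which by \cref{l:k-lemma} is exactly the statement $B\in\Va(\Lin)$.

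First I would record that $\Va(\Lin)$ is an extension of $\FLk$, so \cref{t:equiv-n-potent} guarantees \emph{some} axiomatisation by canonical formulas, but here we want the explicit, sharpened one. Then I would spell out the two directions for $B\in\kRLsi$. For the forward direction: if $B\notin\Va(\Lin)$, \cref{l:k-lemma} produces $A\in\mathbb{A}_{k^2}$ with $A\dmap B$; then the backward implication in \cref{thm:5.2} (equivalently \cref{l:A-refutes-gamma}\eqref{l:A-refutes-gamma:2} with empty $D$) gives $B\not\models\gamma(A)$, so $B$ fails one of the axioms. For the converse: if $B\models\gamma(A)$ for every $A\in\mathbb{A}_{k^2}$, then by \cref{thm:5.2} no $A\in\mathbb{A}_{k^2}$ $(\cdot,\vee,1)$-embeds into $B$, so by \cref{l:k-lemma} $B\in\Va(\Lin)$. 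Passing from $\kRLsi$ to all of $\kRL$ is immediate, since an identity (here, each $\gamma(A)$ viewed as the equation $\gamma(A)\wedge 1=1$, and likewise membership in $\Va(\Lin)$) holds in a variety iff it holds in all its subdirectly irreducibles.

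I would also note one bookkeeping point: \cref{thm:5.2} is stated for $A,B\in\kRLsi$ with $A$ finite, so I must observe that every $A\in\mathbb{A}_{k^2}$ is indeed finite and subdirectly irreducible — finiteness is clear from the picture, and subdirect irreducibility follows from \cref{t:s-i-opremum} since each $A_i$ has a unique coatom (a second-greatest element), these being the lattices drawn with a ``diamond on top of a chain''. This is what makes $\gamma(A)=\gamma(A,\emptyset,\emptyset)$ a legitimate canonical formula in the sense of \cref{d:canform}.

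The main obstacle, such as it is, is not in the logical wrapper but lies already in \cref{l:k-lemma} and in the passage between ``$(\cdot,\vee,1)$-embedding'' and ``$D$-embedding with $D=(\emptyset,\emptyset)$'': one must be comfortable that these coincide and that \cref{thm:5.2} can be invoked with empty difference sets even though the target $B$ need not be finite (the finiteness needed for \cref{lem:4.3} is supplied internally by \cref{lem:4.4}, which interposes a finite $S$). Once that is granted, the proof is a short chain of biconditionals; I expect the only real care to be in the ``for all $A\in\mathbb{A}_{k^2}$'' quantifier matching the ``for some $A\in\mathbb{A}_{k^2}$'' in \cref{l:k-lemma}, which is just contraposition.
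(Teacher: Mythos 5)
Your proposal is correct and follows essentially the same route as the paper: define the variety axiomatised by $\{\gamma(A)\mid A\in\mathbb{A}_{k^2}\}$, use \cref{thm:5.2} to translate refutation of $\gamma(A)$ in a subdirectly irreducible $B$ into the existence of a $(\cdot,\vee,1)$-embedding $A\dmap B$, invoke \cref{l:k-lemma} to identify this with $B\notin\Va(\Lin)$, and conclude by comparing subdirectly irreducible members. Your extra bookkeeping (finiteness and subdirect irreducibility of the algebras in $\mathbb{A}_{k^2}$, and the quantifier contraposition) is sound and only makes explicit what the paper leaves implicit.
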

\begin{proof}
Call $G$ the variety axiomatised by the above set of formulas.  Notice that, by \cref{thm:5.2}, a subdirectly irreducible algebra $B$ belongs to $G$ if, and only if, for no $A \in \mathbb{A}_{k^2}$ does it happen that $A\dmap B$.  By \cref{l:k-lemma} this happens if, and only if, $B$ is a subdirectly irreducible algebra in $\Va(\Lin)$.  So, the subdirectly irreducible algebras in  $G$ and $\Va(\Lin)$  coincide and this readily implies  that $G=\Va(\Lin)$.
\end{proof}

We obtain directly from \cref{t:prelinear-axiom} and \cref{cor: stablefmp} the following known result; see \cite{GJKO}.

\begin{corollary}
The variety $\Va(\Lin)$ has the finite model property.
\end{corollary}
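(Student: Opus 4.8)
The statement to be proved is that $\Va(\Lin)$ has the finite model property, and the natural route is the one the corollary's preamble already advertises: deduce it by combining \cref{t:prelinear-axiom} with \cref{cor: stablefmp}. So the first move is to observe that \cref{t:prelinear-axiom} exhibits $\Va(\Lin)$ as the subvariety of $\kRL$ axiomatised by the set $\{\gamma(A)\mid A\in\mathbb{A}_{k^2}\}$, i.e.\ by $(\cdot,\vee,1)$-canonical formulas of the special shape $\gamma(A)=\gamma(A,\emptyset,\emptyset)$. By \cref{thm:5.4}, being axiomatised by a family of such formulas is exactly the criterion for an extension of $\FLk$ to be \emph{stable} (in the sense of \cref{def:5.4}); hence $\Va(\Lin)$ is a stable subvariety of $\kRL$.

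\textbf{Key step.} Once stability is in hand, the finite model property is immediate from \cref{cor: stablefmp}, which asserts precisely that every stable extension of $\FLk$ has the finite model property. Translating back: the logic whose equivalent algebraic semantics is $\Va(\Lin)$ (pre-linear $k$-potent $\FL_{ew}$) is therefore complete with respect to its finite members, which is what the f.m.p.\ for the variety $\Va(\Lin)$ means — every quasi-identity, or here every identity, that fails in $\Va(\Lin)$ fails in a finite algebra of $\Va(\Lin)$.

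\textbf{Main obstacle.} There is essentially no obstacle at the level of the corollary itself; all the work has been pushed into \cref{t:prelinear-axiom} (which required \cref{l:k-lemma}, i.e.\ the combinatorial analysis of minimally incomparable pairs producing a $(\cdot,\vee,1)$-subalgebra of bounded size isomorphic to one of the $A_i$) and into the general machinery of \cref{cor: stablefmp} and \cref{thm:5.4}. The only point that warrants a sentence of care is making sure the hypotheses of \cref{cor: stablefmp} genuinely apply: that $\Va(\Lin)$ is an extension of $\FLk$ (clear, since $\Lin\subseteq\kRL$ forces $\Va(\Lin)\subseteq\kRL$) and that the axiomatisation furnished by \cref{t:prelinear-axiom} is indeed by formulas $\gamma(A_i)=\gamma(A_i,\emptyset,\emptyset)$ with $A_i\in\kRLsifin$, so that \cref{thm:5.4} yields stability. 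With that observed, the proof is the two-line chain: $\Va(\Lin)$ is axiomatised by $\{\gamma(A)\mid A\in\mathbb{A}_{k^2}\}$ by \cref{t:prelinear-axiom}, hence stable by \cref{thm:5.4}, hence has the finite model property by \cref{cor: stablefmp}.
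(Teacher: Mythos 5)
Your proof is correct and follows exactly the route the paper itself takes: \cref{t:prelinear-axiom} gives the axiomatisation by the formulas $\gamma(A)$ for $A\in\mathbb{A}_{k^2}$, whence stability via \cref{thm:5.4}, whence the finite model property via \cref{cor: stablefmp}. Your explicit check that the algebras in $\mathbb{A}_{k^2}$ lie in $\kRLsifin$ (so that \cref{thm:5.4} applies) is a reasonable piece of added care, but the argument is the same as the paper's.
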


This example also underlines the more complex behaviour of $\kRL$ compared to Heyting algebras.
Note indeed, that the variety generated by linear Heyting algebras is axiomatised by taking the stable formulas of only $A_{0}$ and $A_{1}$ \cite{Bezhanishvili2014a}. 

\subsection{Pre-linear $\kRL$-algebras of bounded height}\label{ss:bounded-pl-k-RL}

We want to axiomatise the variety generated by the class $\Lin_{\leq h}$ of all linearly ordered algebras in $\kRL$ of cardinality at most $h$. Actually, the variety is also generated by the class $\Lin_{h}$ of all linearly ordered algebras in $\kRL$ of cardinality exactly $h$, given the construction $A \mapsto 2[A]$ mentioned in the last proof, under which every algebra in $\Lin_{\leq h}$ can be embedded in an algebra in $\Lin_h$.   As in the previous subsection, we look for a minimal set of algebras $S$ such that for any $B\in\kRLsi$, $B\in\Va(\Lin_h)$ if, and only if, none of the algebras in $S$ $D$-embeds into $B$, with $D=(\emptyset,\emptyset)$.  It is easy to see that in the case of Heyting algebras it suffices to take as elements of $S$ the algebras $A_{1}$ and $A_{2}$ from the previous subsection, plus the linearly ordered Heyting algebra with $h+1$ elements. In our case, there are numerous linearly ordered algebras in $\kRL$ with $h+1$ elements, forming the class $\Lin_{h+1}$.

\begin{lemma}\label{l:k-lemma2}
An algebra $B\in\kRLsi$ does not belong to $\Va(\Lin_h)$ if, and only if, some algebra in $\Lin_{h+1} \cup \mathbb{A}_{k^2}$ $D$-embeds into $B$, where $D=(\emptyset,\emptyset)$.
\end{lemma}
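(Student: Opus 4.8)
The plan is to mirror the proof of \cref{l:k-lemma} and reduce to the structural dichotomy between "$B$ is not a chain" and "$B$ is a chain that is too tall".

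\textbf{The forward direction.} Suppose $B\in\kRLsi$ does not belong to $\Va(\Lin_h)$. There are two cases. If $B$ is not linearly ordered, then exactly as in the proof of \cref{l:k-lemma} we extract a minimally incomparable pair $c,d$ in the finite (by $k$-potency) set $Y$ of product combinations of two incomparable elements, and the set $J=\{1,c\vee d\}\cup\{e\in Y\mid e\leq c,d\}$ is a $(\cdot,\vee,1)$-subalgebra of $B$ of size at most $k^2+2$; hence it is (isomorphic to) an algebra in $\mathbb{A}_i$ for some $i\leq k^2$, which $D$-embeds into some algebra in $\mathbb{A}_{k^2}$ by the superalgebra construction $A\mapsto 2[A]$, so by transitivity of $D$-embeddings (with $D=(\emptyset,\emptyset)$) some algebra in $\mathbb{A}_{k^2}$ $D$-embeds into $B$. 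If on the other hand $B$ is a chain, then since $B\notin\Va(\Lin_h)$ and the subdirectly irreducible algebras of $\Va(\Lin_h)$ are the linearly ordered algebras in $\kRL$ of cardinality at most $h$ (using that $\Lin_{\leq h}$ and $\Lin_h$ generate the same variety, together with the fact recalled in \cref{ss:pl-k-RL} that s.i.\ algebras in a variety generated by chains are chains), $B$ must have at least $h+1$ elements. Picking any $h+1$ elements of $B$ including $1$, or more carefully the bottom-most $h+1$-element chain together with $1$, yields a $(\cdot,\vee,1)$-subalgebra: in an integral $k$-potent chain any initial segment closed under products is automatically a subalgebra, and one checks that a suitable $(h+1)$-element sub-chain containing $1$ can always be found (if $|B|>h+1$, take the least $h$ elements plus $1$; products of elements $<1$ stay below the relevant cut by integrality and order-preservation). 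This gives an algebra in $\Lin_{h+1}$ that $D$-embeds into $B$.

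\textbf{The backward direction.} This is the easy half, as in \cref{l:k-lemma}: every $\vee$-embedding is an order embedding, so if some $A\in\mathbb{A}_{k^2}$ $D$-embeds into $B$ then $B$ contains an incomparable pair and is not a chain, hence $B\notin\Va(\Lin_h)$ since all s.i.\ members of $\Va(\Lin_h)$ are chains; and if some $A\in\Lin_{h+1}$ $D$-embeds into $B$ then $B$ contains a chain of $h+1$ elements, so $B$, if it were in $\Va(\Lin_h)$, would be an s.i.\ algebra of size $\geq h+1$, contradicting that s.i.\ members of $\Va(\Lin_h)$ have at most $h$ elements.

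\textbf{Main obstacle.} The delicate point is the chain case of the forward direction: extracting from an arbitrary (possibly infinite) $k$-potent integral chain $B$ with at least $h+1$ elements an $(h+1)$-element \emph{$(\cdot,\vee,1)$-subalgebra}. Closure under $\vee$ is automatic in a chain, and closure under $\cdot$ requires choosing the $h+1$ elements so that all products of the chosen elements land back in the chosen set; integrality ($x\cdot y\leq x\wedge y$) makes an initial segment together with $1$ a natural candidate, but one must ensure the segment genuinely has $h+1$ elements and is product-closed, which is where the bulk of the (still routine) verification lies. Once that is in hand, the comparison with the Heyting case — where only $A_1$, $A_2$ and the $(h+1)$-element chain are needed, versus the whole class $\Lin_{h+1}\cup\mathbb{A}_{k^2}$ here — is immediate.
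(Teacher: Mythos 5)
Your proof is correct and follows essentially the same route as the paper's: the non-chain case is delegated to the argument of \cref{l:k-lemma}, the tall-chain case is handled by taking the bottom $h$ elements together with $1_B$ (product-closed by integrality and downward closure) to obtain a member of $\Lin_{h+1}$, and the converse uses that the subdirectly irreducible members of $\Va(\Lin_h)$ are chains of at most $h$ elements. The only difference is expository: you flag the verification that a product-closed $(h+1)$-element sub-chain exists as the delicate step, whereas the paper states it without comment.
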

\begin{proof}
 The subdirectly irreducible algebras $\Va(\Lin_h)$ are totally ordered, so clearly no algebra in $\mathbb{A}_{k^2}$ embeds into any of them. Also, no algebra in  $\Lin_{h+1}$ embeds either, as it has more elements.

 Conversely, if $B\not\in \Va(\Lin_h)$ then either $B$ is not linearly ordered, hence some algebra from $\mathbb{A}_{k^2}$ can be embedded in it, as seen in the proof of \cref{l:k-lemma}, or otherwise $B$ is linearly ordered with more than $h$ elements. Consider the bottom $h$-many elements of $B$ together with $1_B$, and note that they form a $(\cdot, \vee, 1)$ subalgebra of $B$ and they also can be uniquely expanded into an algebra in $\Lin_h$.
\end{proof}

As above we can obtain the following result.

\begin{theorem} 
The variety $\Va(\Lin_h)$ is axiomatised over $\kRL$ by $\{\gamma(A)\mid A \in \Lin_h \cup \mathbb{A}_{k^2}\}$. The variety $\Va(\Lin_h)$ has the finite model property.
\end{theorem}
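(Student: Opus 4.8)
The plan is to mirror the proof of \cref{t:prelinear-axiom}, now feeding in \cref{l:k-lemma2} in place of \cref{l:k-lemma}. Write $G$ for the subvariety of $\kRL$ axiomatised by $\{\gamma(A)\mid A\in\Lin_h\cup\mathbb{A}_{k^2}\}$. Since $G$ and $\Va(\Lin_h)$ are both varieties, and a variety is generated by its subdirectly irreducible members, it is enough to show that $G$ and $\Va(\Lin_h)$ contain exactly the same algebras in $\kRLsi$; so I would fix an arbitrary $B\in\kRLsi$ and chase equivalences.

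First I would record that every algebra in $\Lin_h\cup\mathbb{A}_{k^2}$ is finite and subdirectly irreducible — linearly ordered finite algebras are in $\kRLsi$, and the algebras based on the $A_i$ are subdirectly irreducible by \cref{t:s-i-opremum} — so each such $A$ lies in $\kRLsifin$ and \cref{thm:5.2} applies to it. That theorem gives $B\not\models\gamma(A)$ iff there is a $(\cdot,\vee,1)$-embedding $A\dmap B$ with $D=(\emptyset,\emptyset)$. Hence $B\in G$ iff no $A\in\Lin_h\cup\mathbb{A}_{k^2}$ $D$-embeds into $B$. By \cref{l:k-lemma2}, this last condition holds precisely when $B\in\Va(\Lin_h)$. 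Therefore $G$ and $\Va(\Lin_h)$ have the same subdirectly irreducible members, whence $G=\Va(\Lin_h)$, which is the first assertion. (Here one should keep in mind, as noted before \cref{l:k-lemma2}, that $\Va(\Lin_h)=\Va(\Lin_{\leq h})$ via the $A\mapsto 2[A]$ construction, so the variety being axiomatised is indeed the intended one.)

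For the finite model property I would invoke \cref{thm:5.4}: since $\Va(\Lin_h)$ has just been exhibited as axiomatised, over $\FLk$, by formulas of the form $\gamma(A)$ with $A\in\kRLsifin$, the corresponding logic is a \emph{stable} extension of $\FLk$. Then \cref{cor: stablefmp} delivers the finite model property immediately. (Alternatively, one could cite \cref{t:prelinear-axiom} together with the observation that $\Lin_h$ is a finite set of finite algebras, but the route through stability is cleaner.)

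I do not expect any genuine obstacle here beyond bookkeeping; the only two points needing a moment's care are confirming that every member of $\Lin_h\cup\mathbb{A}_{k^2}$ genuinely belongs to $\kRLsifin$, so that \cref{thm:5.2} and \cref{thm:5.4} are applicable, and reconciling $\Va(\Lin_h)$ with $\Va(\Lin_{\leq h})$ as above.
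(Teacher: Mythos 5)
Your overall strategy is exactly the one the paper intends (the paper itself only says ``As above we can obtain the following result''): reduce to subdirectly irreducible algebras, translate refutation of $\gamma(A)$ into $(\cdot,\vee,1)$-embeddability via \cref{thm:5.2}, appeal to the combinatorial lemma, and get the finite model property from \cref{thm:5.4} and \cref{cor: stablefmp}. That skeleton, including your check that the members of $\Lin_h\cup\mathbb{A}_{k^2}$ lie in $\kRLsifin$ and the identification $\Va(\Lin_h)=\Va(\Lin_{\leq h})$, is fine.

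There is, however, one genuine defect in the step where you invoke \cref{l:k-lemma2}. That lemma characterises the subdirectly irreducible $B$ \emph{outside} $\Va(\Lin_h)$ as those into which some algebra from $\Lin_{h+1}\cup\mathbb{A}_{k^2}$ $(\cdot,\vee,1)$-embeds, whereas you (following the theorem as printed) work with $\Lin_{h}\cup\mathbb{A}_{k^2}$. These two conditions are not equivalent, and the set $\{\gamma(A)\mid A\in\Lin_h\cup\mathbb{A}_{k^2}\}$ cannot axiomatise $\Va(\Lin_h)$: for $A\in\Lin_h$ we have $A\in\Va(\Lin_h)$, yet $A\not\models\gamma(A)$ by \cref{l:A-refutes-gamma}(\ref{l:A-refutes-gamma:1}) (equivalently, $A$ embeds into itself, so \cref{thm:5.2} forces $A\notin G$). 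So the formulas you are using are not even valid on the variety they are supposed to axiomatise. The index should be $h+1$ throughout --- this looks like a typo in the paper's statement, since the lemma it rests on is explicitly about $\Lin_{h+1}$ --- and with that correction your argument goes through verbatim: $B\in G$ iff no $A\in\Lin_{h+1}\cup\mathbb{A}_{k^2}$ $(\cdot,\vee,1)$-embeds into $B$ (by \cref{thm:5.2}), iff $B\in\Va(\Lin_h)$ (by \cref{l:k-lemma2}). Since you quote \cref{l:k-lemma2} directly, you should have noticed and resolved this mismatch rather than silently identifying $\Lin_h$ with $\Lin_{h+1}$; as written, the sentence ``By \cref{l:k-lemma2}, this last condition holds precisely when $B\in\Va(\Lin_h)$'' asserts something the lemma does not say.
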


 The above results are sensitive to the absence of a bottom element in the signature, as it allows us to embed $A_i$ into $A_j$, for $i \leq j$, and similarly for the case of $\Lin_{\leq h}$. In case we have the bottom element in the signature the results need to be modified slightly to consider all the algebras in $\kRL$ that are based on some $A_i$, for $i \leq k^2$. This is actually already noticeable for Heyting algebras, for which both $A_2$ and $A_1$ need to be considered, while for the bottom-free reducts, known as Brouwerian algebras, just $A_2$ would be enough.

\section{Further directions}
We conclude the paper with a list of possible future generalisations and open problems.
\begin{enumerate}
\item One can try to drop integrality $x \leq 1$,  as we can use  \cite[Lemma 3.60]{GJKO} to obtain that $1$ has a unique second-last element $s$. Now, $1 \not \leq x$ does not imply $x \leq s$, however, it means that $x \wedge 1 \leq s$, so one can modify the canonical formulas accordingly by adding a $\wedge 1$.
 \item  The $(\wedge, \to, \bot)$-fragment of Heyting algebras has been used also to find different canonical formulas \cite{BB09}.  We wonder what would be the equivalent of that in the case of $\kRL$.
\item  Dropping commutativity, one can still get local finiteness from $n$-potency and e.g., the following axiom: $xyx=xxy$.  However we do not know whether subdirectly irreducible algebras in this class can still be characterised as the ones with a unique second-last element.
\item In order to remove the need for a unique second-last element one can work with canonical rules instead of canonical formulas; see \cite{Jer09} and 
\cite{BBI14} for similar results for modal logics. In \cite{Jer09} and \cite{BGGJ15} these canonical rules are used 
for obtaining bases for admissible rules for transitive modal logics and intermediate logics. Furthermore, these rules yield alternative proofs of the decidability of the admissibility problem for these logics. Therefore, once canonical rules for $\kRL$ and related substructural logics are defined, the natural next step is to investigate 
 whether these rules could be used to study admissible rules for these logics. 
\end{enumerate}

\end{document}